\documentclass[a4paper,12pt, reqno]{amsart}
\usepackage{color}
\usepackage{a4wide}
\usepackage{geometry}
\usepackage{hyperref}
\usepackage{amsmath}
\usepackage{amssymb}
\usepackage{amsthm}
\usepackage[active]{srcltx}
\usepackage[dvips]{graphicx}

\setcounter{tocdepth}{2}

\parskip 4pt

\numberwithin{equation}{section}

\newtheorem{theorem}{Theorem}[section]


\def\max{{\rm max}}

\def\det{{\rm det}}

\def\Int{{\rm Int}}


\def\Sym1{\mathcal{S}(1)}

\def\cA{{\mathcal A}}

\def\cG{{\mathcal G}}

\def\cJ{{\mathcal J}}

\def\cM{{\mathcal M}}

\def\cP{{\mathcal P}}

\def\cS{{\mathcal S}}
\newcommand{\veps}{\varepsilon}

\newcommand{\F}{\mathcal{F}}
\newcommand{\G}{\mathcal{G}}

\newcommand{\wt}{\widetilde}
\newcommand{\FD}{\wt{\F}}
\newcommand{\osc}{\operatornamewithlimits{{\rm osc}}}



\newcommand{\note}[1]{\marginpar{\tiny\emph{#1}}}

\newtheorem{thm}{\textbf{Theorem}}[section]
\newtheorem{lem}[thm]{\textbf{Lemma}}

\theoremstyle{remark}
\newtheorem{rem}[thm]{\textbf{Remark}}

\newtheorem{cor}[thm]{\textbf{Corollary}}

\newtheorem{exe}[thm]{\textbf{Example}}

\theoremstyle{definition}
\newtheorem{defn}[thm]{{Definition}}

\newtheoremstyle{Claim}{}{}{\itshape}{}{\itshape\bfseries}{:}{ }{#1}
\theoremstyle{Claim}

\newcommand{\N}{{\mathbb N}}

\newcommand{\R}{{\mathbb R}}
\newcommand{\Cn}{{\mathbb C}}
\newcommand{\Hn}{{\mathbb H}}

\DeclareMathOperator{\Diff}{Diff}



\newcommand{\USC}{\mathrm{USC}}
\newcommand{\LSC}{\mathrm{LSC}}

\makeatletter
\newcommand{\tpitchfork}{%
	\vbox{
		\baselineskip\z@skip
		\lineskip-.52ex
		\lineskiplimit\maxdimen
		\m@th
		\ialign{##\crcr\hidewidth\smash{$-$}\hidewidth\crcr$\pitchfork$\crcr}
	}%
}
\makeatother

\linespread{1.1}

\begin{document}

\title[Alexandrov estimates by determinant majorization]{Alexandrov estimates for polynomial operators by determinant majorization}
\author[F.R. Harvey]{F. Reese Harvey}
\address{Department of Mathematics\\ Rice University\\ P.O. Box 1892\\ Houston, TX 77005-1892, USA}
\email{harvey@rice.edu (F. Reese Harvey)}
\author[K.R. Payne]{Kevin R. Payne}
\address{Dipartimento di Matematica ``F. Enriques''\\ Universit\`a di Milano\\ Via C. Saldini 50\\ 20133--Milano, Italy}
\email{kevin.payne@unimi.it (Kevin R.\ Payne)}\thanks{Payne partially supported by the Gruppo Nazionale per l'Analisi Matematica, la Probabilit\`a e le loro Applicazioni (GNAMPA) of the Istituto Nazionale di Alta Matematica (INdAM) and the project: GNAMPA 2024 ``Free boundary problems in noncommutative
	structures and degenerate operators''.}

\date{\today} \linespread{1.2}

\keywords{Determinant majorization, Alexandrov estimates, subequations and subharmonics, semiconvex approximation.}

\subjclass[2010]{26B25, 35B45, 35B50, 35D40, 35J70, 35J96}

\maketitle

\makeatletter
\def\l@subsection{\@tocline{2}{0pt}{2.5pc}{5pc}{}}
\makeatother

	\begin{abstract}
		We obtain estimates on the supremum, infimum and oscillation of solutions for a wide class of inhomogeneous fully nonlinear elliptic equations on Euclidean domains where the differential operator is an {\em $I$-central G\aa rding-Dirichlet operator} in the sense of Harvey-Lawson (2024). The argument combines two recent results: an Alexandrov estimate of Payne-Redaelli (2025) for locally semiconvex functions based on the area formula and a determinant majorization estimate of Harvey-Lawson (2024).
		The determinant majorization estimate has as a special case the arithmetic - geometric mean inequality, so the result includes the classical Alexandrov-Bakelman-Pucci estimate for linear operators. A potential theoretic approach is used involving subequation subharmonics and their dual subharmonics. Semiconvex approximation plays a crucial role.
	\end{abstract}

	\maketitle

\setcounter{tocdepth}{1}
\tableofcontents


\makeatletter
\def\l@subsection{\@tocline{2}{0pt}{2.5pc}{5pc}{}}
\makeatother

\setcounter{tocdepth}{1}
\tableofcontents
	
	\section{Introduction}\label{sec:intro}
	
	This paper is concerned with estimates for viscosity solutions $h$  on bounded open sets $\Omega \subset \R^n$ of inhomogeneous degenerate elliptic fully nonlinear equations of the form
	\begin{equation}\label{GDE}
	\mathfrak{g}(D_x^2h) = f(x), \quad x \in \Omega,
	\end{equation}
	where $f \in C(\Omega)$ and $\mathfrak{g}$ is a homogeneous polynomial operator acting on the Hessian of $h$ which takes values in $\cS(n)$, the space of real symmetric $n \times n$ matrices.  We assume that $\mathfrak{g}$ is an {\bf{\em{$I$-central G\aa rding-Dirichlet polynomial}}} (see Definition \ref{defn:ICGD_poly}). 
	See Definition \ref{defn:DGsolns} for the precise notions of viscosity solution.
	
	The estimates are pointwise {\em a priori} estimates which bound the supremum, infimum and oscillation on $\Omega$ in terms the same quantities on the boundary $\partial \Omega$ plus an additive ``error term'' which, in the case of dual subsolutions to \eqref{GDE}, quantifies the failure of the maximum principle and is often called an Alexandrov maximum principle. Such estimates play a fundamental role in the study of fully nonlinear elliptic PDE and are also often called ABP estimates (for Alexandrov, Bakelman and Pucci). For example, see Chapter 5 of \cite{CC}. 
	
	Our oscillation estimate for viscosity solutions $h \in C(\overline{\Omega})$ to \eqref{GDE} is
	\begin{equation}\label{AOE1}
	\osc_{\Omega} h \leq \osc_{\partial \Omega} h + \frac{\mathrm{diam}(\Omega)}{|B_1|^{1/n}\mathfrak{g}(I)^{1/N}} || f ^{1/N}||_{L^n(\Omega)},
	\end{equation}
	where  $\osc_{\Omega} h := \sup_{\Omega} h - \inf_{\Omega} h$, and similarly for the boundary oscillations (see Theorem \ref{thm:ABP_P1}). Also, $N$ is the degree of $\mathfrak{g}$,  ${\rm diam}(\Omega)$ is the diameter of $\Omega$ and $|B_1|$ is the Lebesgue measure of the unit ball $B_1$ in $\R^n$.

	On the other hand, for more regular solutions $h \in C^{1,1}(\Omega) \cap C(\overline{\Omega})$, one has an improved estimate
	with the integral over the subset $E^{-}(h) = E^{+}(-h)$ of $\Omega$
	\begin{equation}\label{AOE2}
	\osc_{\Omega} h \leq \osc_{\partial \Omega} h + \frac{\mathrm{diam}(\Omega)}{|B_1|^{1/n}\mathfrak{g}(I)^{1/N}} || f ^{1/N}||_{L^n(E^{-}(h))}.
	\end{equation}
	See Remark \ref{rem:ABP_C11} for the proof of \eqref{AOE2}. 
	The subset in \eqref{AOE2} is the intersection
	\begin{equation}\label{AOE3}
	E^{-}(h) := \Gamma^{-}(h) \cap \Diff^2(h) = 	E^{+}(-h) := \Gamma^{+}(-h) \cap \Diff^2(-h) \subset \Omega.
	\end{equation}
	$\Diff^2(h) $ denotes the set of points $x \in \Omega$ at which $h$ is twice differentiable (in the Peano sense) and  $\Gamma^{\pm}(h)$ are the set of flat upper/lower contact points as defined in Definition \ref{defn:UC_LC} (points $x \in \Omega$ that admit a hyperplane that lies globally above/below the graph of $h$ at $x$). $\Diff^2(h)$ is of full Lebesgue measure in $\Omega$. This is because $h \in C^{1,1}(\Omega)$ is equivalent to $h$ being both locally semiconvex and locally semiconcave (see, for example,  Proposition 2.5 of \cite{PR25}). The second order differentiability claim follows from an easy extension of Alexandrov's theorem to locally semiconvex functions (see Theorem 2.2 of \cite{PR25}). The integral term in \eqref{AOE2} quantifies the presence of convexity in $h$ that controls how much $h$ can decrease from its boundary values. In theory, the integral estimate might involve the upper contact set which controlls how much $h$ might increase from its boundary values; but, as we will see, subsolutions are lapalacian subharmonics and hence the maximum principle holds (see Lemma \ref{lem:Gamma_MP}). The less precise estimate \eqref{AOE1} for $h$ merely continuous (in which the Lebesgue norm is taken over all of $\Omega$) is related to the method of proof, which makes use of semicovex approximations by way of the sup-convolution for $-h$. Using all of $\Omega$ is useful in practice since a priori one does not know much about the size of $\Gamma^{-}(h) = \Gamma^{+}(-h)$.
	
	
	Before going further, we will make precise the main class of operators we treat. 
	
	\begin{defn}\label{defn:ICGD_poly}
		A real homogeneous polynomial $\mathfrak{g}$ of degree $N$ on $\cS(n)$ is said to be an {\em $I$-central G\aa rding-Dirichlet polynomial} if it satisfies the following three conditions.
		\begin{itemize}
			\item[(1)] $\mathfrak{g}$ is {\em $I$-hyperbolic}: for each $A \in \cS(n)$ fixed, the polynomial $\varphi_A(t) := \mathfrak{g}(tI + A)$ has only real roots $t = t_1^{\mathfrak{g}}(A), \ldots t_N^{\mathfrak{g}}(A)$. The negatives of these roots $\lambda_k^{\mathfrak{g}}(A) := - t_k^{\mathfrak{g}}(A)$ are called the {\em G\aa rding $I$-eigenvalues of $A$}. (We can and do normalize so that $\mathfrak{g}(I) > 0$).
			\item[(2)] $\mathfrak{g}$ is {\em G\aa rding-Dirichlet}: $\mathfrak{g}$ is $I$-hyperbolic and the (open) {\em G\aa rding cone} $\Gamma$, defined as the connected component containing $I$ of the set $\{ A \in \cS(n): \ \mathfrak{g}(A) \neq 0 \}$, contains
			\begin{equation}\label{GD_cone}
			\Int \, \cP := \{ A \in \cS(n): \ A > 0 \}.
			\end{equation}
			\item[(3)] $\mathfrak{g}$ is {\em $I$-central}: there exists $k > 0$ such that the gradient of $\mathfrak{g}$ at $I$ satisfies
			\begin{equation}\label{I_central}
			D_{I} \mathfrak{g} = kI.
			\end{equation}
		\end{itemize} 
	\end{defn}
	There is much to say about polynomials $\mathfrak{g}$ which satisfy one, or more of the conditions (1), (2) and (3). Here are a few of the major points.
	
	Condition (1) means that $\mathfrak{g}$ is a G\aa rding polynomial on the vector space $V = \cS(n)$. In terms of the G\aa rding $I$-eigenvalues of $A$ one has
	\begin{equation}\label{GMA_op}
	\mathfrak{g}(tI + A) = \prod_{k=1}^N \left( t + \lambda_k^{\mathfrak{g}}(A) \right) \quad \text{and hence} \quad \mathfrak{g}(A) = \prod_{k=1}^N \lambda_k^{\mathfrak{g}}(A),
	\end{equation}
	so that these polynomial operators can be considered as {\em generalized Monge-Amp\`ere operators} $\mathfrak{g}(A)$ generalizing  ${\rm det}(A)$ where the  G\aa rding $I$-eigenvalues of $A$ are just the usual eigenvalues of $A \in \cS(n)$. G\aa rding polynomials were introduced and their basic properties were laid out in G\aa rding's seminal paper\cite{Ga59} for the study of linear hyperbolic equations. For a discussion within our current context of nonlinear elliptic operators see \cite{HL13a}, \cite{HL10} and references therein.
	
	Since $\overline{\Gamma}$ is a convex cone, the condition (2) that $\cP \subset \overline{\Gamma}$ is equivalent to $\Gamma$ being $\cP$-monotone; that is, $\Gamma + \cP \subset \overline{\Gamma}$. In this form, condition (2) was introduced in \cite{HL09} as the key property of a subequation to be able to discuss viscosity solutions of \eqref{GDE}. It ensures that the induced differential operator $\mathfrak{g}(D^2u)$ is degenerate elliptic when restricted to functions $u$ whose Hessian $D^2_xu$ (in the viscosity sense) belongs to $\overline{\Gamma}$. See Appendix \ref{sec:App_A} for more details. A rich potential theoretic approach to existence and uniqueness for solutions $h$ to the Dirichlet problem (for domains which are strictly convex) associated to \eqref{GDE} has been developed (see \cite{HL09}, \cite{HL11}, \cite{CP17}, \cite{HL18c}, \cite{CHLP20}, \cite{CP21}, \cite{HP23} and references therein).
	
	The $I$-central condition (3) was introduced in \cite{HL24} as a key sufficient condition for {\bf{\em{determinant majorization}}} (see Theorem \ref{thm:DME})
	\begin{equation}\label{DME_intro}
	\mathfrak{g}(A)^{1/N} \geq 	\mathfrak{g}(I)^{1/N} \left( {\rm det} A \right)^{1/n}, \quad \forall \, A \geq 0 \ \text{in} \ \cS(n).
	\end{equation}
	This majorization estimate has had a profound impact for obtaining uniform pointwise estimates, local regularity and uniform entropy and energy bounds for classes of natural geometric operators on complex manifolds. It was pioneered in the groundbreaking work of Guo, Phong and Tong \cite{GPT23} with many recent developments in Guo-Phong \cite{GP24a} and \cite{GP24b}, amongst others.
	
	In this paper, the importance of the determinant majorization estimate \eqref{DME_intro} is that it allows one to replace $|{\rm det} D^2w|$ by $\mathfrak{g}(D^2w)$ in the Alexandrov estimate of Theorem \ref{thm:ABP_lqc}
	\begin{equation} \label{ABP_lqc_intro}
	\sup_{\Omega} w \leq \sup_{\partial \Omega} w + \frac{\mathrm{diam}(\Omega)}{|B_1|^{1/n}} \left( \int_{	E^{+}(w)} |\det \,  D^2w(x)| \, dx \right)^{1/n}
	\end{equation}
	when $w$ is locally semiconvex and satisfies $D^2w(x) \geq 0$ almost everywhere on $E^+(w)$. This substitution will lead to the oscillation estimate \eqref{AOE1} for viscosity solutions of \eqref{GDE} if $\mathfrak{g}$ is an $I$-central G\aa rding-Dirichlet polynomial operator. We briefly explain now why this is so.
	
	We will see, in Lemma \ref{lem:Gamma_MP}, that all {\em admissible subsolutions} $u \in \USC(\overline{\Omega})$ of \eqref{GDE} in the sense of Definition \ref{defn:DGsolns}(a) are classical laplacian subharmonics and hence satisfy the maximum principle
	\begin{equation}\label{MP_intro}
	\sup_{\Omega} u \leq \sup_{\partial \Omega} u.
	\end{equation}
	This is an improvement of the supremum estimate \eqref{ABP_lqc_intro} with no additive integral term involving the forcing term $f$. Consequently, it reduces the oscillation estimate to proving an estimate on the infimum for {\em admissible supersolutions} $v \in \LSC(\overline{\Omega})$ of \eqref{GDE} in the sense of Definition \ref{defn:DGsolns}(b). This infimum estimate is equivalent to the supremum estimate 
	\begin{equation}\label{ABPDG_dual_intro}
	\sup_{\Omega} w \leq \sup_{\partial \Omega} w + \frac{\mathrm{diam}(\Omega)}{|B_1|^{1/n}\mathfrak{g}(I)^{1/N}} || f ^{1/N}||_{L^n(\Omega)}.
	\end{equation}
	where $w:=-v \in \USC(\overline{\Omega})$ is a {\em dual subharmonic} (see the remarks following Definition \ref{defn:DGsolns}). This means that for each $x \in \Omega$ , $w$ satisfies
	\begin{equation}\label{F_dual}
	J^{2,+}_x w \in \wt{\F}_x := \{(r,p,A) \in  \cJ^2: \mbox{$-A \not\in \overline{\Gamma}$ or  [$-A \in \overline{\Gamma}$ and $\mathfrak{g}(-A) - f(x) \leq 0$]}\}, 
	\end{equation}
	where $J^{2,+}_x w$ is the space of all upper contact jets of $w$ at $x$ (see Definition \ref{defn:UCJ}). When $w$ is also locally semiconvex, one has $A = D^2 w(x) \leq 0$ on $E^+(w)$ and hence $-A \in \cP \subset \overline{\Gamma}$ so that $\mathfrak{g}(-D^2 w(x)) \leq f(x)$ and one completes the estimate if $w$ is locally semiconvex. Finally, the procedure of seminconvex approximation utilizing the fiberegularity (as developed in \cite{CPR23}) is used to eliminate the locally semiconvex hypothesis (see Lemma \ref{lem:QCA}).
	
	The paper is organized as follows. Section \ref{sec:ABP-lqc} contains the Alexandrov estimates for functions which are locally semiconvex/semiconcave or both. 
	Section \ref{sec:DME} presents the key result on determinant majorization for $I$-central polynomial operators  and the associated operator theory. Section \ref{sec:ABP_poly} presents the proof of the main result on Alexandrov estimates for $I$-central G\aa rding-Dirichlet operators, but with the proof of the crucial technical semiconvex approximation result of Lemma \ref{lem:QCA} later, in Appendix \ref{sec:App_B}. In Section \ref{sec:examples}, we briefly illustrate 
	the wealth of $I$-central G\aa rding-Dirichlet operators (for which the main theorem (Theorem \ref{thm:ABP_P1} applies). Finally, in Section \ref{sec:NG_Ops}, we give an extension of the main result to $I$-central homogeneous polynomial operators which are not necessarily G\aa rding operators in the sense of Definition \ref{defn:ICGD_poly}. The key point will be the variant the determinant majorization result (see Theorem \ref{thm:DME:NG}) and some representative examples and further questions will be discussed. Appendix \ref{sec:App_A} reviews the necessary operator theory and potential theory for inhomogeneous equations $G(D^2 u) = f$ as considered in this paper and beyond.

	\section{The Alexandrov estimate for locally semiconvex functions}\label{sec:ABP-lqc}
	
	In this section, we recall the basic pointwise estimate of Alexandrov type on the supremum of locally semiconvex functions, together with some easy corollaries for estimates on the infimum and oscillation. As noted in the introduction, the supremum estimate gives control on the growth from the boundary by quantifying the effect of ``concavity'' in the interior. There is a dual result on the decay from the boundary by quantifying the effect of ``convexity''. We remark that classically, one considers $u \in C^2(\Omega) \cap C(\overline{\Omega})$, which then by mollification extends to $W^{2,n}(\Omega) \cap C(\overline{\Omega})$. In \cite{PR25}, we prove this for $u \in \USC(\overline{\Omega})$ which is locally semiconvex. Before stating the results, we recall some basic notions. First, we recall the notion of (local) semiconvexity.
	
	\begin{defn} \label{defn:sc}
		A function $u\colon C\to \R$ is said to be {\em $\lambda$-semiconvex} on a convex set $C \subset \R^n$ if there exists a real number $\lambda \geq 0$ such that the function $u + \frac{\lambda}{2}|\cdot|^2$ is convex on $C$.
		
		We say that $u$ is {\em locally semiconvex} on an open subset $\Omega$ if for every $x\in \Omega$ there exists a ball $B\subset \Omega$ with $x\in B$ such that $u$ is $\lambda$-semiconvex on $B$, for some nonnegative real number $\lambda = \lambda(x)$. 
	\end{defn}
	
	Notice that there is a natural notion of {\em semiconcavity} for functions $v$ that is equivalent to $u :=-v$ being semiconvex. 
	
	The sets which quantify concavity and convexity in Alexandrov-type estimates are contained in the following definition. 
	
	\begin{defn} \label{defn:UC_LC} Given a function $u: \Omega \to \R$ on an open set $\Omega$ we say that $x \in \Omega$ is a {\em point of flat upper contact for $u$ at $x$} if there exists $p \in \R^n$ such that
		\begin{equation}\label{FUCP}
		u(y) \leq u(x) + \langle p, y - x \rangle, \quad \forall \, x \in \Omega;
		\end{equation}
		that is, $u$ admits a supporting hyperplane from above at the point $x$. The set of all flat upper contact points is denoted by $\Gamma^{+}(u)$ and call it the {\em upper contact set for $u$}.
		
		Similarly for the {\em lower contact set} $\Gamma^{-}(u)$ of flat lower contact points, where one reverses the inequality in \eqref{FUCP}.
	\end{defn}
	
	We are now ready to state the locally semiconvex version of Alexandrov's estimate. For a proof, we refer to Theorem 3.36 of \cite{PR25} which makes use of the {\em Area Formula for gradients of locally semiconvex functions} (see, for example, Theorem 3.35 of \cite{PR25}).
	\begin{thm}[Alexandrov's estimate for locally semiconvex functions] \label{thm:ABP_lqc}
		Let $\Omega \subset \R^n$ be open and bounded. If $u\in\USC(\overline\Omega)$ is locally semiconvex, then 
		\begin{equation} \label{ABP_lqc}
		\sup_{\Omega} u \leq \sup_{\partial \Omega} u + \frac{\mathrm{diam}(\Omega)}{|B_1|^{1/n}} \left( \int_{	E^{+}(u)} |\det \,  D^2u(x)| \, dx \right)^{1/n}.
		\end{equation}
	\end{thm}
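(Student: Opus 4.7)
The plan is to extend the classical Alexandrov-Bakelman-Pucci argument to the locally semiconvex setting, following the approach of \cite{PR25}. One may assume $M := \sup_\Omega u - \sup_{\partial \Omega} u > 0$, since otherwise \eqref{ABP_lqc} is immediate. By upper semicontinuity and compactness of $\overline{\Omega}$ together with this reduction, the supremum is attained at some interior point $x_0 \in \Omega$.

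The central geometric observation is that every slope of sufficiently small norm is realized by a supporting hyperplane at a flat upper contact point. Fix $p \in \R^n$ with $|p| < M/\mathrm{diam}(\Omega)$ and set
$$v_p(y) := u(y) - \langle p, y - x_0 \rangle, \qquad y \in \overline{\Omega}.$$
Then $v_p(x_0) = u(x_0) = \sup_\Omega u$, while for $y \in \partial \Omega$,
$$v_p(y) \leq \sup_{\partial \Omega} u + |p|\,\mathrm{diam}(\Omega) < \sup_{\partial \Omega} u + M = u(x_0).$$
Hence $v_p$ attains its maximum on $\overline{\Omega}$ at some $\bar x \in \Omega$, and rearranging the inequality $v_p(y) \le v_p(\bar x)$ gives
$$u(y) \leq u(\bar x) + \langle p, y - \bar x \rangle \qquad \forall y \in \Omega,$$
so $\bar x \in \Gamma^+(u)$ with $p$ as supporting slope. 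Sweeping $p$ over $B_{M/\mathrm{diam}(\Omega)}$ thus embeds this open ball into the image of $\Gamma^+(u)$ under the (set-valued) upper supporting-slope map.

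To conclude, I would compare the Lebesgue measure of this ball of slopes with $\int_{E^+(u)} |\det D^2 u|\,dx$ via the Area Formula for gradients of locally semiconvex functions (Theorem 3.35 of \cite{PR25}). By the extension of Alexandrov's theorem recalled in the introduction, $u$ is twice Peano differentiable at almost every point of $\Omega$; at any $x \in E^+(u) = \Gamma^+(u) \cap \Diff^2(u)$ the only supporting slope is $\nabla u(x)$, and moreover $D^2 u(x) \leq 0$. The area formula then provides
$$|B_1|\left( \frac{M}{\mathrm{diam}(\Omega)} \right)^n = \left|B_{M/\mathrm{diam}(\Omega)}\right| \leq \int_{E^+(u)} |\det D^2 u(x)| \, dx,$$
which rearranges to exactly \eqref{ABP_lqc}. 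The main obstacle is this last step: the classical area formula assumes Lipschitz gradient regularity, whereas here $\nabla u$ is only BV, and the technical content of \cite{PR25} is precisely that local semiconvexity is the right hypothesis under which the change-of-variables identity continues to hold with the pointwise Alexandrov Hessian playing the role of the Jacobian.
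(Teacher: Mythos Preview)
Your proposal is correct and follows the same route that the paper indicates (the paper does not prove the theorem in-text but cites Theorem~3.36 of \cite{PR25}, whose key input is precisely the Area Formula for gradients of locally semiconvex functions, Theorem~3.35 of \cite{PR25}). Your tilting argument to embed the ball $B_{M/\mathrm{diam}(\Omega)}$ into the set of supporting slopes, followed by the area-formula comparison, is the standard ABP mechanism; the only difference from \cite{PR25} as described in Remark~\ref{rem:ABP}(c) is that they first reduce to the weaker estimate with $\sup_{\partial\Omega} u^+$ on the right, which is equivalent to your direct reduction $M>0$.
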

	Here we have used the following notations. $|B_1|$ is the Lebesgue measure of the unit ball in $\R^n$ and 
	\begin{equation}\label{E}
	E^{+}(u) := \Gamma^{+}(u) \cap \Diff^2(u) \subset \Omega
	\end{equation}
	is the set of all flat global upper contact points at which $u$ is twice differentiable.

	A few remarks about the Alexandrov estimate are worth noting.  
	
	\begin{rem}\label{rem:ABP} Notice that:
		\begin{itemize}
			\item[(a)] In \eqref{ABP_lqc}, one can replace $	E^{+}(u)$ with the (a priori) larger set $\Gamma^{+}(u) $ since $\Diff^2(u)$ is of full measure in $\Omega$ for $u$ locally semiconvex. 
			\item[(b)] One has that $D^2u \leq 0$ on $E^{+}(u)$ and hence $|\det D^2u(x)| = \det(-D^2u(x))$ for each $x \in E^{+}(u)$.
			\item[(c)] In this setting, the estimate \eqref{ABP_lqc} is equivalent to the (a priori) weaker estimate which replaces $\sup_{\partial \Omega} u$ with $\sup_{\partial \Omega} u^+$, as one sees by applying the weaker estimate to $\bar{u} := u - \inf_{\partial \Omega}u$ which is non-negative on $\partial \Omega$. This fact is the first step in the proof in \cite{PR25}.
			\item[(d)] The upper bound  \eqref{ABP_lqc} is interesting only when $u$ is {\bf not subffine} on $\Omega$. We recall that $u \in \USC(\Omega)$ is subaffine if it satisfies the following comparison principle for all affine functions $a$ on $\Omega$:
			\begin{equation}\label{subaffine}
			u \leq a \ \ \text{on} \ \ \partial \Omega \ \ \Leftrightarrow \ \ u \leq a \ \ \text{on} \ \ \Omega.
			\end{equation}
			Such functions are characterized by saying that $u$ is a viscosity subsolution of the maximal eigenvalue operator (see, for example \cite{HL09}). Since the constant function $a:= \sup_{\partial \Omega} u$ is affine, by \eqref{subaffine}, one has that subaffine functions $u$ satisfy the standard maximum principle and thus the (a priori) stronger estimate 
			\begin{equation}\label{MP_SA}\tag{MP}
			\sup_{\Omega} u \leq \sup_{\partial \Omega} u.
			\end{equation}
			Hence, in the interesting (non subaffine) cases, one can view the Alexandrov upper bound \eqref{ABP_lqc} as a {\bf {\em weakened maximum principle}} estimate, with an {\bf {\em additive error term}}. 
		\end{itemize}
	\end{rem}
	
	Next, a pair of easy corollaries.
	
	\begin{cor}[Alexandrov's lower estimate for locally semiconcave functions] Let $\Omega \subset \R^n$ be open and bounded. If $v \in \LSC(\overline{\Omega})$ which is locally semiconcave on $\Omega$, then 
		\begin{equation} \label{ABP_lqc_dual}
		\inf_{\Omega} v \geq \inf_{\partial \Omega} v - \frac{\mathrm{diam}(\Omega)}{|B_1|^{1/n}} \left( \int_{	E^{-}(v)} |\det D^2v(x)| \, dx \right)^{1/n},
		\end{equation}	
		where $E^{-}(v):= \Gamma^{-}(v) \cap \Diff^2(v)$ is the (analogous) set of all flat global lower contact points at which $v$ is twice differentiable. 
	\end{cor}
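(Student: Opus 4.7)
The plan is to deduce this corollary directly from Theorem \ref{thm:ABP_lqc} by duality, applying that theorem to $u := -v$. I would first check that $u$ satisfies the hypotheses of Theorem \ref{thm:ABP_lqc}: since $v \in \LSC(\overline{\Omega})$ one has $u = -v \in \USC(\overline{\Omega})$, and since $v$ being locally semiconcave means $-v$ is locally semiconvex (this is the definitional remark immediately after Definition \ref{defn:sc}), $u$ is locally semiconvex on $\Omega$.

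Next, I would translate each of the four quantities appearing in \eqref{ABP_lqc} through the substitution $u = -v$. For the sup/inf terms this is immediate: $\sup_{\Omega} u = -\inf_{\Omega} v$ and $\sup_{\partial \Omega} u = -\inf_{\partial \Omega} v$. For the contact sets, by Definition \ref{defn:UC_LC} a point $x$ is a flat upper contact point for $u = -v$ with affine support $u(x) + \langle p, y-x\rangle$ if and only if $v(y) \geq v(x) + \langle -p, y-x\rangle$ for all $y$, which is precisely the flat lower contact condition for $v$ with slope $-p$; hence $\Gamma^{+}(u) = \Gamma^{-}(v)$. Twice differentiability in the Peano sense is obviously preserved under $v \mapsto -v$, so $\Diff^2(u) = \Diff^2(v)$, and therefore $E^{+}(u) = E^{-}(v)$. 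Finally, on this set $D^2 u(x) = -D^2 v(x)$, and so $|\det D^2 u(x)| = |\det D^2 v(x)|$.

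Substituting these identifications into the conclusion of Theorem \ref{thm:ABP_lqc} for $u$ and multiplying through by $-1$ (which reverses the inequality) yields exactly \eqref{ABP_lqc_dual}. There is no real obstacle here; the only point to take a little care with is the contact-set duality, since the definition is stated in terms of global affine supports, but the sign change on the slope $p$ makes the two conditions into mirror images of each other.
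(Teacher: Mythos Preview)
Your proof is correct and follows exactly the paper's own approach: apply Theorem \ref{thm:ABP_lqc} to $u := -v$ and transcribe sup/inf and upper/lower contact sets via the sign change. The paper does not spell out the verification of $\Gamma^{+}(-v) = \Gamma^{-}(v)$ and $\Diff^2(-v) = \Diff^2(v)$ as you do, but this is precisely the ``obvious transcription'' it alludes to.
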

	
	To see this, just apply the upper estimate \eqref{ABP_lqc} to $u:= -v$ and make the obvious transcription between sup/inf and the upper/lower contact points. Combining the upper and lower Alexandrov estimate immediately yields the following result.
	
	\begin{cor}[Alexandrov's oscillation estimate for $C^{1,1}$ functions] Let $\Omega \subset \R^n$ be open and bounded. If $w \in C(\overline{\Omega}) \cap C^{1,1}(\Omega)$, then 
		\begin{equation} \label{ABP_osc}
		\osc_{\Omega} w \leq \osc_{\partial \Omega} w + 2^{1 - 1/n}\, \frac{\mathrm{diam}(\Omega)}{|B_1|^{1/n}} \left( \int_{	E^{+}(w) \cup E^{-}(w)} |\det D^2w(x)| \, dx \right)^{1/n},
		\end{equation}	
		where the oscillation is $\osc_{\Omega}(w) := \sup_{\Omega}(w) - \inf_{\Omega}(w)$ and similarly for the boundary oscillation.
	\end{cor}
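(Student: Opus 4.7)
The plan is to combine the two preceding Alexandrov estimates (the upper bound from Theorem~\ref{thm:ABP_lqc} and the lower bound from the preceding corollary) and then consolidate the right-hand side using an elementary power-mean inequality. The hypothesis $w \in C(\overline{\Omega}) \cap C^{1,1}(\Omega)$ makes $w$ simultaneously $\USC$, $\LSC$, locally semiconvex, and locally semiconcave on $\Omega$ (as already noted in the introduction), so both one-sided estimates apply to the \emph{same} function $w$.

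First, I would apply Theorem~\ref{thm:ABP_lqc} to $w$ (viewed as locally semiconvex) and the lower Alexandrov estimate \eqref{ABP_lqc_dual} to $w$ (viewed as locally semiconcave), obtaining
\begin{equation*}
\sup_{\Omega} w \leq \sup_{\partial \Omega} w + \frac{\mathrm{diam}(\Omega)}{|B_1|^{1/n}} \, I_+^{1/n}, \qquad \inf_{\Omega} w \geq \inf_{\partial \Omega} w - \frac{\mathrm{diam}(\Omega)}{|B_1|^{1/n}} \, I_-^{1/n},
\end{equation*}
where $I_{\pm} := \int_{E^{\pm}(w)} |\det D^2 w(x)|\, dx$. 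Subtracting the second inequality from the first gives
\begin{equation*}
\osc_{\Omega} w \leq \osc_{\partial \Omega} w + \frac{\mathrm{diam}(\Omega)}{|B_1|^{1/n}} \bigl( I_+^{1/n} + I_-^{1/n} \bigr).
\end{equation*}

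Next, to merge the two $n$-th roots into a single integral over $E^{+}(w) \cup E^{-}(w)$, I would use the concavity of $t \mapsto t^{1/n}$ on $[0,\infty)$, which yields the power-mean inequality $a^{1/n} + b^{1/n} \leq 2^{1-1/n}(a+b)^{1/n}$ for all $a,b \geq 0$, and apply it with $a = I_+$, $b = I_-$. This produces the precise constant $2^{1-1/n}$ appearing in the statement.

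Finally, I would observe that $I_+ + I_- = \int_{E^{+}(w) \cup E^{-}(w)} |\det D^2 w(x)|\, dx$: the sets $E^{\pm}(w)$ can overlap, but at any $x \in E^{+}(w) \cap E^{-}(w) \cap \Diff^2(w)$ one has both $D^2 w(x) \leq 0$ (Remark~\ref{rem:ABP}(b) for the upper contact set) and $D^2 w(x) \geq 0$ (its analogue for the lower contact set), forcing $D^2 w(x) = 0$, so the double count on $E^{+}(w) \cap E^{-}(w)$ contributes nothing to the integrand. There is no real obstacle: the result is a clean consequence of the two one-sided estimates plus the scalar inequality, and the only point requiring a moment's thought is the harmless overlap of the contact sets.
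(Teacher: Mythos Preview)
Your proof is correct and follows exactly the approach the paper sketches: apply both one-sided Alexandrov estimates to $w$ (using that $C^{1,1}$ functions are both locally semiconvex and semiconcave), subtract, and combine the two $n$-th roots via the concavity of $t \mapsto t^{1/n}$. Your careful treatment of the overlap $E^{+}(w)\cap E^{-}(w)$ (where $D^2w=0$, so the integrand vanishes) is a point the paper leaves implicit.
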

	
	To see this, since $w \in \USC(\overline{\Omega}) \cap \LSC(\overline{\Omega})$ is both locally semiconvex and locally semiconcave (see, for example,  Proposition 2.5 of \cite{PR25}), just take the difference of the upper and lower bounds \eqref{ABP_lqc} and \eqref{ABP_lqc_dual} and make use of the convexity of the function
	$\psi(t):=t^{1/n}$ for real $t \geq 0$.
	
	\section{Polynomial operators and determinant majorization}\label{sec:DME}
	
	In this section, we present the necessary background on $I$-central G\aa rding-Dirichlet polynomials $\mathfrak{g}$ as defined in Definition \ref{defn:ICGD_poly}. In addition to the crucial determinant majorization, we will recall important facts about the operator theory of the differential operators that they define. 
	
	The following result is Theorem 1.3 of \cite{HL24}. 
	
	\begin{theorem}\label{thm:DME} If $(\mathfrak{g}, \Gamma)$ is an $I$-central G\aa rding-Dirichlet pair with $\mathfrak{g}$ of degree $N$ on $\cS(n)$, then the determinant is majorized:
		\begin{equation}\label{DME}
		\mathfrak{g}(A)^{1/N} \geq 	\mathfrak{g}(I)^{1/N} \left( {\rm det} A \right)^{1/n}, \quad \forall \, A \geq 0 \ \text{in} \ \cS(n).  
		\end{equation}
	\end{theorem}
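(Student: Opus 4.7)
The plan is to promote the static inequality \eqref{DME} to a convexity statement for the one-parameter family $t \mapsto \log\mathfrak{g}(A^t)$ of spectral powers of $A$, and then read off the majorization from the supporting-line inequality at the identity.

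First I would reduce to $A > 0$ strictly positive by continuity at the boundary. Using the spectral decomposition $A = \sum_{i=1}^n a_i P_i$ with rank-one orthogonal projections $P_i = u_i u_i^T$ and $a_i > 0$, the spectral calculus gives $A^t = \sum_i a_i^t P_i$ for $t \in \R$. Expanding $\mathfrak{g}$ through its symmetric $N$-linear polarization $M_{\mathfrak{g}}$ yields
\[
\mathfrak{g}(A^t) \;=\; \sum_{|\alpha| = N} c_\alpha \, e^{t \, \alpha \cdot \log a}, \qquad c_\alpha \;:=\; \binom{N}{\alpha} \, M_{\mathfrak{g}}\bigl(P_1^{\alpha_1}, \ldots, P_n^{\alpha_n}\bigr),
\]
where $\alpha \in \Z_{\geq 0}^n$ and $\log a := (\log a_1, \ldots, \log a_n)$.

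The heart of the proof, and the main obstacle, is the nonnegativity of every coefficient $c_\alpha$: this is where both structural hypotheses on $\mathfrak{g}$ cooperate. The G\aa rding-Dirichlet inclusion $\overline{\cP} \subseteq \overline{\Gamma}$ places each rank-one projection $P_i$ in the closed G\aa rding cone. G\aa rding's fundamental inequality $M_{\mathfrak{g}}(B_1, \ldots, B_N) \geq \mathfrak{g}(B_1)^{1/N} \cdots \mathfrak{g}(B_N)^{1/N}$ on $\overline{\Gamma}^N$ (see \cite{Ga59}; cf.\ also \cite{HL13a}), together with $\mathfrak{g} \geq 0$ on $\overline{\Gamma}$ by continuity from $\mathfrak{g} > 0$ on $\Gamma$, then forces $M_{\mathfrak{g}} \geq 0$ on $\overline{\Gamma}^N$. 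Hence $c_\alpha \geq 0$, and $\psi(t) := \log \mathfrak{g}(A^t)$ is a log-sum-exp of exponentials with nonnegative weights, which is convex on $\R$.

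The endgame is the supporting-line inequality $\psi(1) \geq \psi(0) + \psi'(0)$. The chain rule with $\tfrac{d}{dt} A^t \bigr|_{t = 0} = \log A$ gives
\[
\psi'(0) \;=\; \frac{\langle D_I \mathfrak{g}, \log A \rangle}{\mathfrak{g}(I)}.
\]
The $I$-central hypothesis $D_I \mathfrak{g} = k I$ combined with Euler's identity $\langle D_I \mathfrak{g}, I \rangle = N \mathfrak{g}(I)$ pins down $k = N \mathfrak{g}(I)/n$, so $\psi'(0) = (N/n) \tr(\log A) = (N/n) \log \det A$. Substituting yields $\log \mathfrak{g}(A) \geq \log \mathfrak{g}(I) + (N/n) \log \det A$, which on exponentiating and taking $1/N$-th roots is exactly \eqref{DME}.
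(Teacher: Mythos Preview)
The paper does not give its own proof of Theorem~\ref{thm:DME}; it imports the result as Theorem~1.3 of \cite{HL24}. Your argument is correct and, judging from the statement of the more general non-G\aa rding variant Theorem~\ref{thm:DME:NG} (Theorem~2.3 of \cite{HL24}), it follows the same architecture one would expect of the cited proof: first establish a coefficient-positivity property, then exploit $I$-centrality through a convexity (supporting-line) argument. Indeed, your coefficients $c_\alpha = \binom{N}{\alpha} M_{\mathfrak{g}}(P_1^{[\alpha_1]}, \ldots, P_n^{[\alpha_n]})$ are exactly the coefficients of the polynomial $\mathfrak{p}_\tau$ appearing in condition~(ii) of Theorem~\ref{thm:DME:NG}, with $\tau$ the orthogonal matrix diagonalizing $A$; your deduction of $c_\alpha \geq 0$ from G\aa rding's mixed-form inequality on $\overline{\Gamma}^N$ is precisely the step that shows G\aa rding-Dirichlet polynomials satisfy the coefficient condition. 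The log-sum-exp convexity of $\psi(t)=\log\mathfrak{g}(A^t)$ and the Euler-identity computation $k = N\mathfrak{g}(I)/n$ are clean and correct.

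One minor expository point: you write that ``both structural hypotheses cooperate'' in the nonnegativity step $c_\alpha \geq 0$, but only the G\aa rding-Dirichlet inclusion $\cP \subset \overline{\Gamma}$ is used there (to place the rank-one projections $P_i$ in $\overline{\Gamma}$); the $I$-central hypothesis enters only later, in evaluating $\psi'(0)$.
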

	
	In preparation for the combination of Theorem \ref{thm:DME} with the Alexandrov estimates of the previous section, a few remarks concerning the operator theory of $\mathfrak{g}$ acting on the Hessian of functions will be useful. What we have to say is by now 
	well-known and hence we will be brief. Readers unfamiliar with the concepts introduced might wish to consult the monograph \cite{CHLP20} as well as Appendix \ref{sec:App_A}.
	
	\newpage
	\begin{rem}\label{rem:DGOps1}[Degenerate ellipticity and admissibility constraints]
		Notice that:
		\begin{itemize}
			\item[(a)] Since $\mathfrak{g}$ is a {\bf {\em G\aa rding-Dirichlet polynomial}} of degree $N$  with closed G\aa rding cone $\overline{\Gamma}$, one knows that (see Definition \ref{defn:ICGD_poly} (2))
			\begin{equation}\label{Pmono}
			\overline{\Gamma} + \cP \subset \overline{\Gamma},
			\end{equation}
			which is equivalent to $\cP \subset \overline{\Gamma}$. The property \eqref{Pmono} implies that $\overline{\Gamma}$ is a pure second order subequation, which is also a cone with vertex at the origin.
			\item[(b)] The operator defined by $\mathfrak{g}$ is {\bf {\em degenerate elliptic}} when restricted to the closed cone $\overline{\Gamma}$; that is,
			\begin{equation}\label{gDE}
			\mathfrak{g}(A + P) \geq \mathfrak{g}(A), \quad \forall \, A \in \overline{\Gamma}, P \geq 0.
			\end{equation}
			\item[(c)] Degenerate ellipticity is essential for the theory of viscosity solutions for equations involving $\mathfrak{g}$ and hence restricting the domain of $\mathfrak{g}$ to $C(\overline{\Gamma})$ is necessary.  The correct notion of viscosity subsolutions $u$ to an equation like
			\begin{equation}\label{NHE}
			\mathfrak{g}(D^2 u) = f(x), \quad x \in \Omega
			\end{equation}
			involves $\overline{\Gamma}$ as an  {\bf {\em admissibility constraint}}. This will be made precise in Definition \ref{defn:DGsolns} and implies that the natural notion of subsolutions to \eqref{NHE} must also be $\overline{\Gamma}$-subharmonic (their Hessians belong to $\overline{\Gamma}$ in the viscosity sense). They are not necessarily convex since $\cP \subset \overline{\Gamma}$.
		\end{itemize}
	\end{rem}
	
	\begin{rem}\label{rem:DGOps2}[Tameness and the correspondence principle] Notice also:
		\begin{itemize}
			\item[(a)] The operator $\mathfrak{g} \in C(\overline{\Gamma})$ is {\bf {\em tame}} (see Definition 2.3 of \cite{HL18b}) which in this case means: for every $\eta > 0$ there exists $c(\eta) > 0$ such that
			\begin{equation}\label{tame1}
			\mathfrak{g}(A + P) - \mathfrak{g}(A) \geq c(\eta), \quad \forall \, A \in \overline{\Gamma}, \ \forall \, P \geq \eta I,
			\end{equation}
			which is stronger than the degenerate ellipticity. 
			\item[(b)] In particular, if one uses $P = \eta I$ in \eqref{tame1}, one finds a useful special case of the tameness estimate. In terms of the G\aa rding $I$-eigenvalues $\lambda_1^{\mathfrak{g}}(A) \ldots  \lambda_N^{\mathfrak{g}}(A)$ for $A \in \cS(n)$, we recall the expansions of \eqref{GMA_op}) for $\eta \in \R$ and $A \in \cS(n)$:
			\begin{equation}\label{GMA}
			\mathfrak{g}(A + \eta I) =  \prod_{k=1}^N \left( \eta + \lambda_k^{\mathfrak{g}}(A) \right) \quad \text{and} \quad 	\mathfrak{g}(A) =  \prod_{k=1}^N \lambda_k^{\mathfrak{g}}(A).
			\end{equation}
			Making use of \eqref{GMA} with $\eta > 0$ and $A \in \overline{\Gamma}$ (for which $\lambda_k^{\mathfrak{g}}(A) \geq 0$) one finds 
			\begin{equation}\label{tame2}
			\mathfrak{g}(A + \eta I) - \mathfrak{g}(A) = \mathfrak{g}(I) \left[  \prod_{k=1}^N \left( \eta + \lambda_k^{\mathfrak{g}}(A) \right) - \prod_{k=1}^N \lambda_k^{\mathfrak{g}}(A)  \right] \geq  \eta^N > 0.
			\end{equation}
			\item[(c)] The pair $(\mathfrak{g}, \overline{\Gamma})$ is a {\bf {\em compatible}} (pure second order) operator-subequation pair in the sense of Definition 11.1 of \cite{CHLP20}, where the compatibility (in this constrained case) means
			\begin{equation}\label{compatibility} 
			\partial \overline{\Gamma} = \{ A \in \overline{\Gamma}: \mathfrak{g}(A) = 0\}.
			\end{equation}
			$(\mathfrak{g}, \overline{\Gamma})$ is {\bf {\em $\cM$-monotone}} in the sense of Definition 11.2 of \cite{CHLP20} for the monotoncity cone subequation $\cM = \R \times \R^n \times \cP$, which in this case pure second order case reduces to 
			\begin{equation}\label{MM}
			\overline{\Gamma} + \cP \subset \overline{\Gamma} \quad \text{and} \quad \mathfrak{g}(A + P) \geq \mathfrak{g}(A), \ \ \forall \, A \in \overline{\Gamma}, P \in \cP.
			\end{equation}
			The strict monotonicity implied by the tameness of $\mathfrak{g}$ on $\overline{\Gamma}$ ensures that $\mathfrak{g}$ is {\bf {\em topologically tame}}; that is, for each admissible level $c \in \mathfrak{g}(\overline{\Gamma})$ the level set $\mathfrak{g}(c) := \{A \in \overline{\Gamma}: \mathfrak{g}(A) = c\}$ has non empty interior (see Theorem 11.10 of \cite{CHLP20}).  Hence, by Theorem 11.13 of \cite{CHLP20}, we have the {\bf {\em correspondence principle}} for admissible subsolutions/supersolutions for the constant coefficient equation $\mathfrak{g}(D^2u) = c$ for each admissible level $c \in \R$. This correspondence principle will be extended in Theorem \ref{thm:CPP} to allow for continuous functions $f$ in place of constants $c$.
		\end{itemize}
	\end{rem}

	\section{Alexandrov estimates for polynomial operators}\label{sec:ABP_poly}
	
	In this section, we present the main result of this paper. Assume that $(\mathfrak{g}, \Gamma)$ is an $I$-central G\aa rding-Dirichlet operator-subequation pair with $\mathfrak{g}$ of degree $N$ on $\cS(n)$.  On each bounded open set $\Omega \subset \R^n$ and for each $f \in C(\Omega)$ 
	with $f \geq 0$, consider the following equation on $\Omega$:
	\begin{equation}\label{DGE}
	\mathfrak{g}(D^2h) = f \quad \mbox{with the admissibility constraint $D^2h \in \overline{\Gamma}$}.
	\end{equation}

	
	Before stating and proving the Alexandrov estimates for \eqref{DGE}, we recall the notion of viscosity solutions with $\overline{\Gamma}$ as an admissibility constraint that was anticipated in Remark \ref{rem:DGOps1} (c). See \cite{CIL92} for the conventional viscosity theory where no admissibility constraints are needed. As in all viscosity formulations, one makes use of spaces of upper and lower test functions or jets. There are many equivalent formulations. Here we will make use of the spaces of {\em upper/lower contact jets} of a function $u$ at a point $ x \in \Omega$ defined as follows. 
	
	\begin{defn}\label{defn:UCJ} Let $u$ be a function defined on $\Omega \subset \R^n$.  We say that $\varphi$ is an {\em upper test function for $u$ at $x \in \Omega$} if
		\begin{equation}\label{UCF}
		\mbox{$\varphi$ is $C^2$ and $u \leq \varphi$ near $x$ \quad and  \quad $u(x) = \varphi(x)$}.
		\end{equation}
		The space of {\em upper contact jets for $u$ at $x$} is defined by
		\begin{equation}\label{UCJ}
		J^{2,+}_x u := \{ (\varphi(x), D \varphi(x), D^2 \varphi(x)): \ \varphi \ \text{is an upper test function for $u$ at $x$} \}.
		\end{equation}
		The spaces of {\em lower test functions for $u$ at $x \in \Omega$} and {\em lower contact jets 	$J^{2,-}_x u$  for $u$ at $x$} are defined similarly. It is enough to replace use $u \leq \varphi$ near $x$ with   $u \geq \varphi$ near $x$ in \eqref{UCF}.
	\end{defn}
	
	\begin{defn}\label{defn:DGsolns} With $\mathfrak{g}, f$ and $\Omega$ as above,
		\begin{itemize}
			\item[(a)] $u \in \USC(\Omega)$ is an {\em admissible subsolution of} \eqref{DGE} {\em on $\Omega$} if 
			\begin{equation}\label{sub}
			\forall \, x \in \Omega: \quad (r,p,A) \in J^{2,+}_x u \quad \Rightarrow \quad A \in \overline{\Gamma} \ \text{and} \ \mathfrak{g}(A) \geq f(x).
			\end{equation}
			\item[(b)] $v \in \LSC(\Omega)$ is an {\em admissible supersolution of} \eqref{DGE} {\em on $\Omega$} if 
			\begin{equation}\label{super}
			\forall \, x \in \Omega: \quad (r,p,A) \in J^{2,-}_x u \quad \Rightarrow \quad \mbox{$A \not\in \overline{\Gamma}$ or  [$ A \in \overline{\Gamma}$ and $\mathfrak{g}(A) \leq f(x)$]}.
			\end{equation}
			\item[(c)] $h \in C(\Omega)$ is an {\em admissible solution of} \eqref{DGE} {\em on $\Omega$} if both (a) and (b) hold with $u = h$ and $v = h$ respectively.
		\end{itemize}
	\end{defn}
	
	Denoting by $\cJ^2 := \R \times \R^n \times \cS(n)$ the vector space of $2$-jets, we have an equivalent potential theoretic formulation in terms of the (pure second order) constraint set with fibers
	\begin{equation}\label{Fsub}
	\F_x := \{(r,p,A) \in  \cJ^2: \ A \in \overline{\Gamma} \ \text{and} \ \mathfrak{g}(A) - f(x) \geq 0\}, \quad x \in \Omega
	\end{equation}
	and its dual constraint set with fibers
	\begin{equation}\label{Fdualsub}
	\wt{\F}_x := \{(r,p,A) \in  \cJ^2: \ \mbox{$-A \not\in \overline{\Gamma}$ or  [$-A \in \overline{\Gamma}$ and $\mathfrak{g}(-A) - f(x) \leq 0$]}\}, \quad x \in \Omega.
	\end{equation}
	
	\begin{rem}[Equivalent potential theoretic formulation]\label{rem:correspondence}
		Consider the constrained case operator-subequation pair defined by 
		\begin{equation}\label{compatible_pair}
		F(x,r,p,A):= \mathfrak{g}(A) - f(x) \quad \text{and} \quad \cG := \R \times \R^n \times \overline{\Gamma}.
		\end{equation}
		whose $\cG$-admissible subsolutions (supersolutions) are precisely the subsolutions (supersolutions) of Definition \ref{defn:DGsolns}. The natural candidate for a subequation associated to the pair $(F, \cG)$ is defined by the {\bf {\em  correspondence relation}}
		\begin{equation}\label{CR}
		\F_x := \{ (r,p,A) \in \cG: \ F(x,r,p,A) \geq 0 \}, \quad x \in \Omega,
		\end{equation}
		which is exactly the constraint \eqref{Fsub}. 
		
		Moreover, the pair $(F, \cG)$ satisfies two important properties.
		\begin{enumerate}
			\item $(F, \cG)$ is {\bf {\em $\cM$-monotone}} for the monotonicity cone subequation $\cM(\cP) = \R \times \R^n \times \cP$; this means that for each $x \in \Omega$:
			$$
			\cG + \cM(\cP) \subset \cG \quad \text{and} \quad F(x,J + J') \geq F(x,J), \forall \, J \in \cG, J' \in \cM(\cP).
			$$
			This fact follows from the degenerate ellipticity of $\mathfrak{g}$ on $\overline{\Gamma}$ and the fact that $F$ is of pure second order.
			\item  $(F, \cG)$ is {\bf {\em fiberegular}}  on $\Omega$. By defintion, this means that $\cG$ is fiberegular and (since the pair is $\cM$-monotone \footnote{See Definition 3.1 of \cite{CPR23} for the general notion of fiberegularity of for a subequation defined by \eqref{CR} and its equivalent reformulations such as \eqref{fibereg1} for $\cM$-monotone subequtions in Proposition 3.2 of \cite{CPR23}}) for every $\Omega' \subset \subset \Omega$ and for every $\eta > 0$ there exists $\delta = \delta(\Omega', \eta) > 0$ such that
			\begin{eqnarray}\label{fibereg1}
			\forall \, x,y \in \Omega' \ \text{with} \ |x - y| < \delta \quad \text{and} \quad \forall \, (r,p,A) \in \cG: \nonumber \\
			F(x,r,p,A) \geq 0 \ \ \Rightarrow \ \ F(y,r,p,A + \eta I) \geq 0. 
			\end{eqnarray}
			We will refer to $\delta = \delta(\Omega', \eta) > 0$ as the {\em modulus of continuity of the $\cM(\cP)$-monotone pair $(F, \cG)$} on $\Omega' \subset \subset \Omega$.
			
			The fiberegularity of $\cG$ is automatic because $\cG$ has constant fibers. The implication \eqref{fibereg1} follows by using the unform continuity of $f$ on $\Omega'$ and the tameness \eqref{tame2} of the pair $(\mathfrak{g}, \overline{\Gamma})$. More precisely, by the uniform continuity of $f$, for each $\veps > 0$ there exists $\delta_{f,\Omega'} = \delta_{f,\Omega'}(\veps) > 0$ such that
			\begin{equation}\label{fUC}
			|f(x) - f(y) | < \veps \quad \forall \, x,y \in \Omega' \ \text{with} \ |x-y| < \delta_{f,\Omega'}(\veps)
			\end{equation}
			Combining this with the tameness \eqref{tame2} of $(\mathfrak{g}, \overline{\Gamma})$ one has: for each $\eta > 0$ and for each $(r,p,A) \in \cG = \R \times \R^n \times \overline{\Gamma}$ with $G(A) - f(x) \geq 0$
			\begin{eqnarray*}
				F(y,r,p,A + \eta I) &=& 	[\mathfrak{g}(A + \eta I) - \mathfrak{g}(A)] + [G(A) - f(x)] +  [f(y) + f(x)] \\
				&\geq&  \eta^N + 0 - \veps, \quad 
			\end{eqnarray*}
			which holds $\forall \, x,y \in \Omega' \ \text{with} \ |x - y| < \delta_{f,\Omega'}(\veps)$. This gives \eqref{fibereg1} with modulus of continuity
			\begin{equation}\label{MOC_fibereg}
			\delta(\Omega',\eta) = \delta_{f,\Omega'}\left(  \eta^N \right).
			\end{equation}
			
			Notice that for $\eta$ fixed, \eqref{fibereg1} holds for all $\delta \in (0, \delta(\Omega', \eta))$ where $\delta(\Omega', \eta) \to 0^+$ as $\eta \to 0^+$ and hence $\delta$ is indeed a modulus of continuity for the pair $(F, \cG)$.
		\end{enumerate}
		From points (1) and (2), Theorem 7.11 of \cite{CPR23} shows that the constraint set defined by \eqref{Fsub} is an $\cM(\cP)$-monotone subequation which is fiberegular in the sense that
		\begin{equation}\label{fibereg2}
		\forall \, \eta > 0: \quad\F_x + \eta (0,0,I) \subset \F_y \quad \forall \, x,y \in \Omega \ \text{with} \ |x - y| < \delta = \delta(\Omega, \eta),
		\end{equation}
		which by duality carries over to $\wt{\F}$ defined in \eqref{Fdualsub}, with the same modulus of continuity $\delta$ as in \eqref{fibereg2}.
		
		Finally, $(F, \cG)$ and $\F$ also satisfy the {\bf {\em compatibility condition}} 
		\begin{equation}\label{CC}
		\Int \, \F = \{(x,r,p,A) \in \cG: F(x,r,p,A) > 0\}.
		\end{equation}
	\end{rem}
	
	Hence by applying Theorem 7.8 of \cite{CPR23}, we have the following version of the correspondence principle. See \cite{HP25} for a recent survey on the history and importance of the correspondence principle.
	
	\begin{thm}[Correspondence principle for inhomogeneous polynomial equations]\label{thm:CPP}
		Suppose that $\mathfrak{g}$ is an $I$-central G\aa rding-Dirichlet polynomial on $\cS(n)$ and that $f \in C(\Omega)$. One has the following statements.
		\begin{itemize}
			\item[(a)] For each $u \in \USC(\Omega)$,
			\begin{equation*}\label{CP1}
			\mbox{$u$ is an admissible subsolution of \eqref{DGE} on $\Omega \Leftrightarrow  u$ is $\F$-subharmonic on $\Omega$},
			\end{equation*}
			where $\F$ is the subequation defined by \eqref{Fsub}.
			\item[(a)] For each $v \in \LSC(\Omega)$
			\begin{equation*}\label{CP2}
			\mbox{$v$ is an admissible supersolution of \eqref{DGE} on $\Omega \Leftrightarrow  u$ is $\F$-superharmonic on $\Omega$},
			\end{equation*}
			which by duality, is equivalent to 
			\begin{equation*}\label{CP3}
			\mbox{$v$ is an admissible supersolution of \eqref{DGE} on $\Omega \Leftrightarrow  w := -v$ is $\wt{\F}$-subharmonic on $\Omega$},
			\end{equation*}
			where $\wt{\F}$ is the subequation defined by \eqref{Fdualsub}.
		\end{itemize}
	\end{thm}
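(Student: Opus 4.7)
The plan is to apply Theorem 7.8 of \cite{CPR23}, which provides a correspondence principle (admissible sub/supersolutions equal $\F$-sub/superharmonics) for compatible, $\cM$-monotone, fiberegular constrained operator-subequation pairs. The proof is then essentially a bookkeeping exercise: verify the three hypotheses of that theorem for the pair $(F, \cG)$ of \eqref{compatible_pair} and then read off the conclusion. All of the relevant verifications have already been carried out in Remark \ref{rem:correspondence}; I would simply assemble them.

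For the pair $(F, \cG)$ the $\cM(\cP)$-monotonicity is immediate from $\overline{\Gamma} + \cP \subset \overline{\Gamma}$ together with the degenerate ellipticity \eqref{gDE} and the pure second order nature of $F$; the compatibility identity \eqref{CC} follows from \eqref{compatibility}. The only step with any genuine content is fiberegularity: given $\Omega' \subset\subset \Omega$ and $\eta > 0$, uniform continuity of $f$ on $\Omega'$ produces $\delta > 0$ such that $|x - y| < \delta$ forces $|f(x) - f(y)| < \eta^N$, and then for any $(r,p,A) \in \F_x$ (so $A \in \overline{\Gamma}$ and $\mathfrak{g}(A) \geq f(x)$) the tameness estimate \eqref{tame2} gives $A + \eta I \in \overline{\Gamma}$ and
\begin{equation*}
\mathfrak{g}(A + \eta I) \;\geq\; \mathfrak{g}(A) + \eta^N \;\geq\; f(x) + \eta^N \;>\; f(y),
\end{equation*}
which is precisely the fiberegularity implication \eqref{fibereg1}, with the explicit modulus \eqref{MOC_fibereg}.

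With these three properties confirmed, Theorem 7.8 of \cite{CPR23} yields the equivalence in part (a) and the first equivalence in part (b), namely that admissible supersolutions coincide with $\F$-superharmonic functions. The second equivalence in (b) is the standard $\F \leftrightarrow \wt{\F}$ duality for fiberegular subequations: by definition $\wt{\F}_x$ is the complement in $\cJ^2$ of $-\Int\, \F_x$, and a direct complement computation with the fibers \eqref{Fsub} produces exactly the disjunction ``$-A \notin \overline{\Gamma}$ or ($-A \in \overline{\Gamma}$ and $\mathfrak{g}(-A) \leq f(x)$)'' of \eqref{Fdualsub}. Since $\wt{\F}$ is fiberegular with the same modulus by \eqref{fibereg2}, the passage $v \mapsto w := -v$ sends $\F$-superharmonics to $\wt{\F}$-subharmonics and back, completing part (b).

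The main obstacle, insofar as there is one, is the fiberegularity check; but as indicated it amounts to a one-line combination of uniform continuity of $f$ with the tameness estimate \eqref{tame2}, already spelled out in Remark \ref{rem:correspondence}. Beyond that, the argument is a mechanical invocation of the general theory of \cite{CPR23} plus the standard complement bookkeeping for the dual subequation.
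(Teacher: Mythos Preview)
Your proposal is correct and follows essentially the same route as the paper: verify $\cM(\cP)$-monotonicity, compatibility, and fiberegularity for the pair $(F,\cG)$ exactly as in Remark \ref{rem:correspondence}, then invoke Theorem 7.8 of \cite{CPR23} and the standard duality. The paper's argument is identical in substance; you have simply made the bookkeeping a bit more explicit.
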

	

	We are now ready for the main result of this paper.
	
	\begin{thm}[Alexandrov estimates for $I$-central polynomial operators]\label{thm:ABP_P1} Suppose that $(\mathfrak{g}, \Gamma)$ is an $I$-central G\aa rding-Dirichlet pair with $\mathfrak{g}$ of degree $N$ on $\cS(n)$ and supoose that $f \in C(\R^n)$ with $f \geq 0$. For each $\Omega \subset \R^n$ open and bounded one has the following Alexandrov estimates.
		\begin{itemize}
			\item[(a)] If  $u \in \USC(\overline{\Omega})$ is an admissible subsolution of \eqref{DGE} on $\Omega$ (equivalently, $u$ is $\F$-subharmonic on $\Omega$ for $\F$ defined by \eqref{Fsub}) then
			\begin{equation}\label{ABPDG_sub}
			\sup_{\Omega} u \leq \sup_{\partial \Omega} u;
			\end{equation}
			that is, the maximum principle ${\rm (MP)}$ holds.
			\item[(b)] If  $v \in \LSC(\overline{\Omega})$ is an admissible supersolution of \eqref{DGE} on $\Omega$ (equivalently, $v$ is $\F$-superharmonic on $\Omega$) then
			\begin{equation}\label{ABPDG_super}
			\inf_{\Omega} v \geq \inf_{\partial \Omega} v - \frac{\mathrm{diam}(\Omega)}{|B_1|^{1/n} \mathfrak{g}(I)^{1/N}} || f ^{1/N}||_{L^n(\Omega)}.
			\end{equation}
			Equivalently, $w:=-v \in \USC(\Omega)$ is $\wt{\F}$-subarmonic on $\Omega$ with $\wt{\F}$ defined by \eqref{Fdualsub} and satisfies 
			\begin{equation}\label{ABPDG_dual}
			\sup_{\Omega} w \leq \sup_{\partial \Omega} w + \frac{\mathrm{diam}(\Omega)}{|B_1|^{1/n}\mathfrak{g}(I)^{1/N}} || f ^{1/N}||_{L^n(\Omega)}.
			\end{equation}
			\item[(c)] If  $h \in C(\overline{\Omega})$ is an admissible solution of \eqref{DGE} in $\Omega$ (equivalently, $h$ is $\F$-harmonic on $\Omega$) then
			\begin{equation}\label{ABPDG_osc}
			\osc_{\Omega} h \leq \osc_{\partial \Omega} h + \frac{\mathrm{diam}(\Omega)}{|B_1|^{1/n}\mathfrak{g}(I)^{1/N}} || f ^{1/N}||_{L^n(\Omega)}.
			\end{equation}
		\end{itemize}
	\end{thm}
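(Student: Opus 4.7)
The plan is to treat the three parts in the order (a), (b), (c), with (b) doing all the essential work. For part (a), I would simply invoke Lemma \ref{lem:Gamma_MP}: since $(\mathfrak{g}, \overline{\Gamma})$ is $I$-central Gårding-Dirichlet, admissible subsolutions of \eqref{DGE} are $\overline{\Gamma}$-subharmonic, and the $I$-central condition $D_I\mathfrak{g} = kI$ with $k>0$ together with $\cP \subset \overline{\Gamma}$ forces $\overline{\Gamma}$ to lie in the half-space $\{\tr A \ge 0\}$. Hence any $\F$-subharmonic $u$ is Laplacian subharmonic, and the classical maximum principle yields \eqref{ABPDG_sub}.

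For part (b), I would pass through the equivalent formulation \eqref{ABPDG_dual} via the correspondence principle (Theorem \ref{thm:CPP}): setting $w:=-v$, it suffices to prove \eqref{ABPDG_dual} for $w \in \USC(\overline{\Omega})$ which is $\wt{\F}$-subharmonic on $\Omega$. The argument has three layers:
\textbf{(i) Reduction to the semiconvex case.} Assume first that $w$ is locally semiconvex on $\Omega$. Then Alexandrov's theorem (see Theorem 2.2 of \cite{PR25}) guarantees $D^2w(x)$ exists a.e., and the set $E^+(w)$ has full measure in $\Gamma^+(w)$. At every $x \in E^+(w)$, the 2-jet $(w(x), Dw(x), D^2w(x))$ is an upper contact jet, so the $\wt{\F}$-subharmonicity gives $(w(x), Dw(x), D^2w(x)) \in \wt{\F}_x$. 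Since $x \in E^+(w)$ forces $D^2w(x) \le 0$, we have $-D^2w(x) \in \cP \subset \overline{\Gamma}$, and the disjunctive definition of $\wt{\F}_x$ in \eqref{Fdualsub} collapses to the second alternative: $\mathfrak{g}(-D^2w(x)) \le f(x)$.
\textbf{(ii) Determinant majorization.} Applied to $A = -D^2w(x) \ge 0$, Theorem \ref{thm:DME} yields
\[
(\det(-D^2w(x)))^{1/n} \le \mathfrak{g}(I)^{-1/N}\,\mathfrak{g}(-D^2w(x))^{1/N} \le \mathfrak{g}(I)^{-1/N} f(x)^{1/N}
\]
a.e. on $E^+(w)$. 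Raising to the $n$-th power, integrating over $E^+(w) \subset \Omega$, and taking $n$-th roots gives
\[
\left(\int_{E^+(w)} |\det D^2w(x)|\,dx\right)^{1/n} \le \mathfrak{g}(I)^{-1/N}\,\|f^{1/N}\|_{L^n(\Omega)}.
\]
Inserting this bound into the Alexandrov estimate \eqref{ABP_lqc} of Theorem \ref{thm:ABP_lqc} produces \eqref{ABPDG_dual} when $w$ is locally semiconvex.

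\textbf{(iii) Removal of the semiconvexity hypothesis.} For a general $\wt{\F}$-subharmonic $w$ I would apply Lemma \ref{lem:QCA} (the semiconvex approximation result, whose proof is deferred to Appendix \ref{sec:App_B}): the sup-convolutions $w^\varepsilon$ are locally semiconvex on subdomains $\Omega_\varepsilon \Subset \Omega$ exhausting $\Omega$, and are $\wt{\F^{\eta(\varepsilon)}}$-subharmonic (in terms of the fiber-shifted subequation given by $f$ replaced by $f + \eta(\varepsilon)$, with $\eta(\varepsilon) \to 0^+$), courtesy of the fiberegularity \eqref{fibereg2} with modulus $\delta$. Applying step (i)-(ii) to $w^\varepsilon$ on $\Omega_\varepsilon$ gives
\[
\sup_{\Omega_\varepsilon} w^\varepsilon \le \sup_{\partial\Omega_\varepsilon} w^\varepsilon + \frac{\mathrm{diam}(\Omega)}{|B_1|^{1/n}\mathfrak{g}(I)^{1/N}}\,\|(f+\eta(\varepsilon))^{1/N}\|_{L^n(\Omega)},
\]
and I would then pass to the limit $\varepsilon \to 0^+$, using the standard fact that $w^\varepsilon \searrow w$ and upper semicontinuous convergence up to the boundary to recover \eqref{ABPDG_dual}.

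Part (c) is immediate: an admissible solution $h$ is both sub- and supersolution, so combining \eqref{ABPDG_sub} applied to $h$ with the infimum version \eqref{ABPDG_super} applied to $h$ and subtracting yields \eqref{ABPDG_osc}.

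\textbf{Main obstacle.} The analytic backbone is step (iii): the semiconvex approximation via sup-convolution must preserve $\wt{\F}$-subharmonicity up to an $\eta(\varepsilon)$-shift in the continuous inhomogeneous term $f$, and this requires the fiberegularity of the pair $(F,\cG)$ spelled out in \eqref{fibereg1}-\eqref{fibereg2}. Steps (i) and (ii) are then essentially a bookkeeping combination of Theorems \ref{thm:ABP_lqc} and \ref{thm:DME}; the real technical content lives in Lemma \ref{lem:QCA}, whose proof relies on the tameness estimate \eqref{tame2} and uniform continuity of $f$ to produce the explicit modulus \eqref{MOC_fibereg}.
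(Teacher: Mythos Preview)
Your overall strategy matches the paper's: part (a) via Lemma \ref{lem:Gamma_MP}, part (c) by combining (a) and (b), and part (b) by first treating the locally semiconvex case (Alexandrov estimate \eqref{ABP_lqc} plus determinant majorization \eqref{DME}) and then removing semiconvexity by sup-convolution approximation through Lemma \ref{lem:QCA}. Two technical points in your step (iii) deserve correction.

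First, you omit the reduction to bounded $w$. Lemma \ref{lem:QCA} as stated requires $|w|\le M$ on $\Omega$; upper semicontinuity on $\overline{\Omega}$ only gives an upper bound. The paper handles this (its Step 1) by truncating $w_m:=\max\{w,-m\}$, observing that $w_m$ is still $\wt{\F}$-subharmonic (constants are $\wt{\F}$-subharmonic since $f\ge 0$, and the max of two subharmonics is subharmonic), and noting that $\sup_\Omega w_m=\sup_\Omega w$ for large $m$.

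Second, your description of what Lemma \ref{lem:QCA} delivers is not what the lemma says. The lemma does \emph{not} assert that the bare sup-convolution $w^\veps$ is subharmonic for a shifted subequation with $f$ replaced by $f+\eta(\veps)$. Rather, it asserts that the \emph{quadratically perturbed} function $w_\eta^\veps:=w^\veps+\tfrac{\eta}{2}|\cdot|^2$ is $\wt{\F}$-subharmonic (for the \emph{original} $\wt{\F}$) on the shrunken domain $\Omega_{\delta(\eta)}$, for all $\veps\le\veps_*(\delta(\eta),M)$. Accordingly, the paper applies steps (i)--(ii) to $w_\eta^\veps$ on $\Omega_{\delta(\eta)}$ (obtaining $\mathfrak{g}(-D^2w_\eta^\veps)\le f$ directly, with no $\eta$-shift in $f$), and then passes to the limit in three parameters $\veps\to 0$, $\delta\to 0$, $\eta\to 0$ in that order, with some care near $\partial\Omega$. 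Your variant (shift $f$ instead of perturb $w$) is plausible in spirit but is not what the cited lemma provides, so as written your step (iii) does not quite close.
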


	\begin{proof}
		For the maximum principle \eqref{ABPDG_sub} of part (a), if $u$ is an admissible subsolution on $\Omega$, by definition, for each $x \in \Omega$ one has 
		$$
		J^{2,+}_x u \subset \F_x = \{(r,p,A): A \in \overline{\Gamma} \ \text{and} \ \mathfrak{g}(A) - f(x) \geq 0\} \subset  \{(r,p,A): A \in \overline{\Gamma}\};
		$$
		that is, $u$ is $\Gamma$-subharmonic on $\Omega$ for the constant coefficient pure second order subequation cone $\overline{\Gamma}$. Hence, the validity of (MP) reduces to the following fact. 
		\begin{lem}\label{lem:Gamma_MP}
			For every $I$-central pair $(\mathfrak{g}, \Gamma)$, the G\aa rding cone $\Gamma$ satisfies 
			\begin{equation}\label{GD_containment}
			\Gamma \subset \{ A \in \cS(n): \ {\rm tr} A > 0 \} \quad \text{and hence} \quad \overline{\Gamma} \subset \{ A \in \cS(n): \ {\rm tr} A \geq 0 \}. 
			\end{equation}
			Consequently, every $u \in \USC(\overline{\Omega})$ which is $\overline{\Gamma}$-subharmonic on a bounded open subset $\Omega \subset \R^n$ is a classical (laplacian) subharmonic function on $\Omega$, and hence, satisfies the maximum principle \eqref{ABPDG_sub}.
		\end{lem} 
		\begin{proof}
			It suffices to prove the first inclusion in \eqref{GD_containment}, which is proven in Proposition 3.1 (2) of \cite{HL24}. For the convenience of the reader, we give the main ideas of the proof. First, for any G\aa rding-Dirichlet polynomial $\mathfrak{g}$ of degree $N$ on $\cS(n)$, each {\em radial derivative}
			\begin{equation}\label{GD1}
			\mathfrak{g}^{(\ell)}(A) :=  \frac{d^{(\ell)}}{dt^{(\ell)}} \left(\mathfrak{g}(tI + A) \right)_{|t = 0}, \quad \forall \, A \in \Gamma, \forall \, \ell = 1, \ldots N - 1
			\end{equation}
			is a G\aa rding-Dirichlet polynomial of degree $N - \ell$ and the associated G\aa rding cones are nested
			$$
			\Gamma_{\mathfrak{g}} \subset 	\Gamma_{\mathfrak{g}^{(1)}} \cdots \Gamma_{\mathfrak{g}^{(N -1)}} = \Int \, \mathcal{H},
			$$
			where $\mathcal{H}$ is a half-space. 
			
			Then, if $\mathfrak{g}$ is $I$-central, one easily verifies that $\mathcal{H}  = \{ A \in \cS(n): \ {\rm tr} A > 0 \}$.
		\end{proof}

		The oscillation estimate \eqref{ABPDG_osc} of part (c) clearly follows from the upper and lower estimates of parts (a) and (b). In fact, one uses $u = h$ in \eqref{ABPDG_sub} and $w = -h$ in \eqref{ABPDG_super} and then subtracts the lower estimate from the upper estimate.
		
		It remains to prove only one of the equivalent estimates \eqref{ABPDG_super} or \eqref{ABPDG_dual} of part (b). We will give a direct proof of \eqref{ABPDG_dual} for dual subharmonics; that is, for $w \in \USC(\overline{\Omega})$ which are $\wt{\F}$-subharmonic in $\Omega$ with $\wt{\F}$ defined by \eqref{Fdualsub}. The idea of the proof is to combine the Alexandrov estimate of Theorem \ref{thm:ABP_lqc} with the determinant majorization estimate of Theorem \ref{thm:DME} along a suitable sequence of locally semiconvex functions which are $\wt{\F}$-subharmonic and approximate $w \in \USC(\overline{\Omega}) \cap \wt{\F}(\Omega)$. 
		
		The proof will involve four steps, but first some preliminary observations are helpful. As we have already noted, the variable coefficient pure second order subequation $\F$ defined by
		$$
		\F_x := \{(r,p,A) \in  \cJ^2: \ A \in \overline{\Gamma} \ \text{and} \ \mathfrak{g}(A) - f(x) \geq 0\}, \quad x \in \Omega
		$$
		is a fiberegular and $\cM$-monotone subequation for the monotonicity cone subequation $\cM = \cM(\cP) := \R \times \R^n \times \cP$. By duality, these two facts also hold for the dual subequation with fibers
		$$
		\wt{\F}_x := \{(r,p,A) \in  \cJ^2: \ \mbox{$-A \not\in \overline{\Gamma}$ or  [$-A \in \overline{\Gamma}$ and $\mathfrak{g}(-A) - f(x) \leq 0$]}\}, \quad x \in \Omega.
		$$
		Since $\wt{\F}$ has variable coefficients, translates and hence sup-convolutions of $\wt{\F}$-subharmonics $w$ need not necessarily be $\wt{\F}$-subharmonic; however, for \underline{bounded} $w$ we have a good approximation procedure as will be described in Step 2.
		
		\noindent \underline{Step 1:} (Reduction to $w$ bounded) First notice that $w \in \USC(\overline{\Omega}) \cap \wt{\F}(\Omega)$ is bounded from above by the upper semicontinuity of $w$ on $\overline{\Omega}$ compact. Without loss of generality, we can suppose that $w$ is also bounded from below. Indeed, suppose that for each $w \in \USC(\overline{\Omega}) \cap \wt{\F}(\Omega)$ which is bounded below, one has the desired estimate
		\begin{equation}\label{recall_Alex_dualsub}
		\sup_{\Omega} w \leq \sup_{\partial \Omega} w + \frac{\mathrm{diam}(\Omega)}{|B_1|^{1/n}} || f ^{1/N}||_{L^n(\Omega)}.
		\end{equation}
		If $w$ is not bounded from below, then we can truncate $w$ from below by defining for each $m \in \N$:
		\begin{equation}
		w_m(x) := \max \{ w(x), -m\}, \quad \forall \, x \in \overline{\Omega}.
		\end{equation}
		For each $m \in \N$, one has $w_m \in \USC(\overline{\Omega}) \cap \wt{\F}(\Omega)$ and is bounded from below. The semicontinuity and boundedness follow easily from the definition of $w_m$ as does the claim of being $\wt{\F}$-subharmonic since clearly all constant functions have zero Hessian (and hence lie in $\overline{\Gamma}$) and $\mathfrak{g}(D^2(-m)) - f(x) = -f(x) \leq 0$ by the non-negativity of $f$. Then use the easy fact that the maximum of two subharmonics is always a subharmonic (see Proposition D.1 (B) of \cite{CHLP20}). Applying the estimate \eqref{recall_Alex_dualsub} to the sequence $\{w_m\}$ yields 
		$$
		\sup_{\Omega} w_m \leq \sup_{\partial \Omega} w_m + \frac{\mathrm{diam}(\Omega)}{|B_1|^{1/n}} || f ^{1/N}||_{L^n(\Omega)}, \quad \forall \, m \in \N.
		$$
		However, since $w \in \USC(\overline{\Omega})$ has a finite supremum on both $\Omega$ and $\partial \Omega$, one has
		$$
		\sup_{\Omega} w_m = \sup_{\Omega} w \quad \text{and} \quad \sup_{\partial \Omega} w_m = \sup_{\partial \Omega} w, \quad \text{for all large $m$}, 
		$$
		which completes the claim.
		
		\noindent \underline{Step 2:} (The approximation procedure and its properties) Fix any function $\psi \in \USC(\overline{\Omega}) \cap C^2(\Omega)$ which is strictly $\cM$-subharmonic on $\Omega$ for the monotonicity cone subequation $\cM =\cM(P)$ associated to $\wt{\F}$. This just means that $\psi$ is striclty convex. For example, the non-negative quadratic function $\psi := \frac{1}{2} | \cdot|^2$ will do for any bounded $\Omega$. Then for each $\veps > 0$ and each $\eta > 0$ define the perturbed sup-convolution approximations
		\begin{equation}\label{approx1}
		w_{\eta}^{\veps}(x) := w^{\veps}(x) + \eta \psi(x), \quad \text{with} \ \psi(\cdot) = \frac{1}{2} |\cdot|^2 \quad \forall \, x \in \Omega,
		\end{equation}
		where 
		\begin{equation}\label{approx2}
		w^{\veps}(x) := \sup_{y \in \Omega} \left( w(y) - \frac{1}{2 \veps} |y - x|^2 \right), \quad \forall \, x \in \Omega,
		\end{equation}
		is the {\em $\veps$-sup-convolution of $w$}. One knows that	$w^{\veps}$ is $\frac{1}{\veps}$-semiconvex and $w^{\veps} \searrow w$ on $\Omega$ (see, for example, Theorem 4.10 of \cite{PR25}). However, for variable coefficient subeqations like $\wt{\F}$, the regularization $w^{\veps}$ might not be $\wt{\F}$-subharmonic on $\Omega$. The fiberegularity of $\wt{\F}$ ensures that (in a uniform sense) small quadratic perturbations like \eqref{approx1} will be $\wt{\F}$-subharmonic (away from the boundary of $\Omega$) and the (strict) convexity of $\psi$ ensures that $\frac{1}{\veps}$-semiconvexity of $w^{\veps}$ passes to the perturbations $w_{\eta}^{\veps} = w^{\veps} + \eta \psi$. The following result is a special case of Theorem 4.2 of \cite{CPR23}, adapted to our pure second order situation and stated directly for (dual) $\wt{\F}$-subharmonics.
		For completeness, we will give a proof in Appendix \ref{sec:App_B}.
		
		\begin{lem}[Semiconvex approximation of dual subharmonics]\label{lem:QCA} Suppose that $\wt{\F}$ is an $\cM(\cP)$-monotone and fiberegular subequation (with modulus of continuity $\delta = \delta(\eta)$) on $\Omega$ open and bounded.  Suppose that $w \in \wt{\F}(\Omega)$ is bounded, with $|w| \leq M$ on $\Omega$. Then for each $\eta > 0$ there exists $\veps_{\ast} = \veps_{\ast}(\delta(\eta), M) > 0$ such that the perturbed sup-convolutions defined by \eqref{approx1} satisfy
			\begin{equation}\label{approx}
			w_{\eta}^{\veps} \in \wt{\F}(\Omega_{\delta}), \quad \forall \, \veps \in (0, \veps_{\ast}],
			\end{equation}
			where $\Omega_{\delta}:= \{ x \in \Omega: \ {\rm dist}(x, \partial \Omega) > \delta\}$.
		\end{lem} 
		
		
		\noindent \underline{Step 3:} (The Alexandrov estimate for the approximations $	w_{\eta}^{\veps}$ on $\Omega_{\delta}$)	We claim that: {\em for each $\eta > 0$, with  $\delta = \delta(\eta) > 0$ and $\veps_{\ast} = \veps_{\ast}(\delta, M) > 0$ as in Lemma \ref{lem:QCA}, for each $\veps \in (0, \veps_{\ast}]$ one has}
		\begin{equation}\label{Alex0}
		\sup_{\Omega_{\delta}} w_{\eta}^{\veps} \leq \sup_{\partial \Omega_{\delta}} w_{\eta}^{\veps} + \frac{\mathrm{diam}(\Omega_{\delta})}{|B_1|^{1/n} \mathfrak{g}(I)^{1/N}} \left( \int_{E^{+}(w_{\eta}^{\veps}) \cap \Omega_{\delta}} f(x)^{n/N} \, dx \right)^{1/n}. 
		\end{equation}
		
		To prove this claim, we begin by noting that for each $\eta > 0$, $w_{\eta}^{\veps}$ is $\frac{1}{\veps}$-semiconvex on $\Omega$ for each $\veps > 0$. Hence, for each $\delta > 0$, we can apply the Alexandrov estimate \eqref{ABP_lqc} of Theorem \ref{thm:ABP_lqc} on $\Omega_{\delta}$ to deduce
		\begin{equation}\label{Alex1}
		\sup_{\Omega_{\delta}} w_{\eta}^{\veps} \leq \sup_{\partial \Omega_{\delta}} w_{\eta}^{\veps} + \frac{\mathrm{diam}(\Omega_{\delta})}{|B_1|^{1/n}} \left( \int_{E^{+}(w_{\eta}^{\veps}) \cap \Omega_{\delta}} |\det \,  D^2w_{\eta}^{\veps}(x)| \, dx \right)^{1/n}. 
		\end{equation}
		Now, on $E^{+}(w_{\eta}^{\veps}) = {\rm Diff}^2(w_{\eta}^{\veps}) \cap \Gamma^+(w_{\eta}^{\veps})$  we have $D^2(w_{\eta}^{\veps}) \leq 0$ in $\cS(n)$ (since $E^{+}(w_{\eta}^{\veps})$ are flat upper contact points of twice differentiability) and hence
		\begin{equation}\label{Alex2}
		- D^2w_{\eta}^{\veps}(x)  \in \cP \subset \overline{\Gamma}, \quad \forall \, x \in E^{+}(w_{\eta}^{\veps}) \cap \Omega_{\delta}
		\end{equation}
		and
		\begin{equation}\label{Alex3}
		|\det \,  D^2w_{\eta}^{\veps}(x)| =  \det \, \left( - D^2w_{\eta}^{\veps}(x) \right) \quad \forall \, x \in E^{+}(w_{\eta}^{\veps}) \cap \Omega_{\delta}.
		\end{equation}
		By applying the semiconvex approximation of Lemma \ref{lem:QCA}, for each $\veps \in (0, \veps_{\ast}]$, $w_{\eta}^{\veps}$ is $\wt{\F}$-subharmonic on $\Omega_{\delta}$. Then using \eqref{Alex2} and the defintion of $\wt{\F}$ (see \eqref{Fdualsub}) we can conclude that
		\begin{equation}\label{Alex4}
		\mathfrak{g}(- D^2w_{\eta}^{\veps}(x) ) - f(x) \leq 0, \quad \forall \, x \in E^{+}(w_{\eta}^{\veps}) \cap \Omega_{\delta}.
		\end{equation}
		Since $- D^2w_{\eta}^{\veps}(x)  \in \cP$ for all $ x \in E^{+}(w_{\eta}^{\veps}) \cap \Omega_{\delta}$ (see \eqref{Alex2}), applying the determinant majorization \eqref{DME} of Theorem \ref{thm:DME} to the identity \eqref{Alex3} gives
		\begin{equation}\label{Alex5}
		|\det \,  D^2w_{\eta}^{\veps}(x)| = \det \, \left( - D^2w_{\eta}^{\veps}(x) \right) \leq \frac {\mathfrak{g}(- D^2w_{\eta}^{\veps}(x))^{n/N}}{\mathfrak{g}(I)^{n/N}}, \quad \forall \, x \in E^{+}(w_{\eta}^{\veps}) \cap \Omega_{\delta}.
		\end{equation}
		Finally, combining \eqref{Alex4} with \eqref{Alex5} yields
		\begin{equation}\label{Alex6}
		|\det \,  D^2w_{\eta}^{\veps}(x)| \leq \frac {f(x)^{n/N}}{\mathfrak{g}(I)^{n/N}}, \quad \forall \, x \in E^{+}(w_{\eta}^{\veps}) \cap \Omega_{\delta},
		\end{equation}
		which when inserted into \eqref{Alex1} gives the estimate \eqref{Alex0}.
		
		\noindent \underline{Step 4:} (Passing to the limit in $\veps, \delta$ and $\eta$ in \eqref{Alex0}) For simplicity, we will not track the dependence of the integration domain $E^{+}(w_{\eta}^{\veps}) \cap \Omega_{\delta}$ on the parameters $(\veps, \delta, \eta)$ during the limiting procedure. This is accomplished by exploiting monotonicity in the Lebesgue integral to weaken the estimate \eqref{Alex0} to 
		\begin{equation}\label{Alex7}
		\sup_{\Omega_{\delta}} w_{\eta}^{\veps} \leq \sup_{\partial \Omega_{\delta}} w_{\eta}^{\veps} + \frac{\mathrm{diam}(\Omega)}{|B_1|^{1/n} \mathfrak{g}(I)^{1/N}} ||f^{1/N}||_{L^n(\Omega)}.
		\end{equation}
		To simplify notation, we rewrite \eqref{Alex7} as
		\begin{equation}\label{Alex8}
		\sup_{\Omega_{\delta}} w_{\eta}^{\veps} \leq \sup_{\partial \Omega_{\delta}} w_{\eta}^{\veps} + C \quad \text{where} \quad C := \frac{\mathrm{diam}(\Omega)}{|B_1|^{1/n} \mathfrak{g}(I)^{1/N}} ||f^{1/N}||_{L^n(\Omega)}.
		\end{equation}
		
		We first consider the limit as $\veps \searrow 0$. For each $\eta > 0$ and $\delta = \delta(\eta) >0$ fixed, \eqref{Alex8} holds for every $\veps \in (0, \veps_{\ast}(M,\delta) ]$ and we know that as $\veps \searrow 0$
		\begin{equation}\label{conv1}
		w_{\eta}^{\veps} := w^{\veps} + \eta \psi \searrow w_{\eta} := w + \eta \psi, \quad \text{where the convergence is uniform on $\overline{\Omega_{\delta}}$}.
		\end{equation}
		Hence as $\veps \searrow 0$, we have
		$$
		0 \leq \sup_{\Omega_{\delta}} ( w^{\veps} + \eta \psi) - \sup_{\Omega_{\delta}} ( w + \eta \psi) \leq \sup_{\Omega_{\delta}} ( w^{\veps} + \eta \psi  - (w + \eta \psi))= \sup_{\Omega_{\delta}} ( w^{\veps} - w) \to 0,
		$$
		by the uniform convergence \eqref{conv1}. This shows that
		\begin{equation}\label{conv2}
		\sup_{\Omega_{\delta}} w^{\veps}_{\eta}  \searrow \sup_{\Omega_{\delta}} ( w + \eta \psi),\quad \text{$\veps \searrow 0$}.
		\end{equation}
		The same argument applied on $\partial \Omega_{\delta}$ gives
		\begin{equation}\label{conv3}
		\sup_{\partial \Omega_{\delta}} w^{\veps}_{\eta}  \searrow \sup_{\partial \Omega_{\delta}} ( w + \eta \psi),\quad \text{$\veps \searrow 0$}.
		\end{equation}
		Taking the limits in \eqref{conv2} and \eqref{conv3} in the estimate \eqref{Alex8} yields: for each $\eta > 0$ and with $\delta = \delta(\eta) > 0$ one has
		\begin{equation}\label{Alex9}
		\sup_{\Omega_{\delta}} w_{\eta} \leq \sup_{\partial \Omega_{\delta}} w_{\eta} + C,
		\end{equation}
		with $C$ defined in \eqref{Alex8}.
		
		Next we show how to let $\delta \to 0^+$ in \eqref{Alex9}. Given $\alpha > 0$, set 
		$$
		U_{\alpha} = \{ x \in \overline{\Omega}: \ w_{\eta}(x) < \sup_{\partial \Omega} w_{\eta} + C + \alpha\}$$
		and notice that $U_{\alpha}$ is an open neighborhood of $\partial \Omega$ in the topological space $\overline{\Omega}$. It is open because $U_{\alpha}$ is a strict sublevel set of $w_{\eta} \in \USC(\overline{\Omega})$ and contains $\partial \Omega$ because $\alpha$ is positive. Hence, for each $\alpha > 0$ fixed, there exists a $\delta_0 = \delta_0(\alpha) > 0$ such that
		$$
		\partial \Omega_{\delta} \cup (\Omega \setminus \Omega_{\delta}) \subset U_{\alpha}, \quad \forall \, \delta \in (0, \delta_0(\alpha))
		$$
		and hence
		\begin{equation}\label{Alex10}
		w_{\eta}(x) < \sup_{\partial \Omega} w_{\eta} + C + \alpha, \quad \forall \, \delta \in (0, \delta_0), \forall \, x \in \partial \Omega_{\delta} \cup (\Omega \setminus \Omega_{\delta}).
		\end{equation}
		Now, since $\delta(\eta) \to 0^+$ for $\eta \to 0^+$, for any fixed $\delta_{\ast} \in (0, \delta_0(\alpha))$, one has $\delta(\eta) \leq \delta_{\ast} \in (0, \delta_0(\alpha))$ for all small $\eta > 0$. Hence for all small $\eta > 0$ from \eqref{Alex9} we can conclude
		\begin{equation}\label{Alex11}
		\sup_{\Omega_{\delta(\eta)}} w_{\eta} \leq \sup_{\partial \Omega_{\delta(\eta)}} w_{\eta} + C
		< \sup_{\partial \Omega} w_{\eta} + C + \alpha, \quad \forall \, \alpha > 0,
		\end{equation}
		By combining \eqref{Alex11} with \eqref{Alex10}, for all $\eta > 0$ sufficiently small, we have ($\Omega = (\Omega \setminus \Omega_{\delta(\eta)}) \cup \Omega_{\delta(\eta)}$ )
		\begin{equation}\label{Alex12}
		\sup_{\Omega} w_{\eta} \leq \sup_{\partial \Omega} w_{\eta} + C + \alpha, \quad \forall \, \alpha > 0,
		\end{equation}
		and sending $\alpha \to 0^+$ yields
		\begin{equation}\label{Alex13}
		\sup_{\Omega} w_{\eta} \leq \sup_{\partial \Omega} w_{\eta} + C.
		\end{equation}
		Finally, since $w_{\eta} = w + \eta \psi \searrow w$ as $\eta \searrow 0$, with uniform convergence on $\overline{\Omega}$, the same argument used in the limit for $ \veps \searrow 0$ gives
		\begin{equation}\label{Alex14}
		\sup_{\Omega} w \leq \sup_{\partial \Omega} + C, \quad  C := \frac{\mathrm{diam}(\Omega)}{|B_1|^{1/n} \mathfrak{g}(I)^{1/N}} ||f^{n/N}||_{L^1(\Omega)}^{1/n},
		\end{equation}
		which completes the proof.
	\end{proof}

As mentioned in the introduction,  the oscillation estimate \eqref{ABPDG_osc} can be improved by replacing the $L^n$-norm on $\Omega$ by the $L^n$-norm on a smaller subset when the solution $h$ of \eqref{DGE} is $C^{1,1}$.

	\begin{rem}[Oscillation estimate for $C^{1,1}$ solutions]\label{rem:ABP_C11} Suppose that $(\mathfrak{g}, \Gamma)$ is an $I$-central G\aa rding-Dirichlet pair with $\mathfrak{g}$ of degree $N$ on $\cS(n)$ and supoose that $f \in C(\R^n)$ with $f \geq 0$. For each $\Omega \subset \R^n$ open and bounded one has the following Alexandrov oscillation estimate for $h \in C^{1,1}(\Omega) \cap C(\overline{\Omega})$ an admissible subsolution of $\mathfrak{g}(D^2h) = f$ on $\Omega$:
	\begin{equation}\label{ABPDG_osc_C11}
		\osc_{\Omega} h \leq \osc_{\partial \Omega} h + \frac{\mathrm{diam}(\Omega)}{|B_1|^{1/n}\mathfrak{g}(I)^{1/N}} || f ^{1/N}||_{L^n(E^{+}(-h))},
		\end{equation}
where $E^{+}(-h) = \Gamma^{+}(-h) \cap {\rm Diff}^2(-h)$ is the set of flat upper contact points of $-h$ at which it is twice differentiable.

We describe briefly why this is so. First, one continues to have the validity of the maximum principle of Lemma \ref{lem:Gamma_MP}, which controls the sup of $h$ on $\Omega$ by its sup on the boundary. Second, for $-h$ locally semiconvex one has the Alexandrov estimate of Theorem \ref{thm:ABP_lqc}

		\begin{equation} \label{ABP_sqc}
\sup_{\Omega} (-h) \leq \sup_{\partial \Omega} (-h) + \frac{\mathrm{diam}(\Omega)}{|B_1|^{1/n}} \left( \int_{	E^{+}(-h)} |\det \,  D^2(-h)(x)| \, dx \right)^{1/n},
\end{equation}
where $ D^2(-h)(x) \leq 0$ on $	E^{+}(-h)$ and hence $|D^2 (-h)(x)| = D^2 h(x) \geq 0$. The estimate \eqref{ABPDG_osc_C11} then follows by applying the determinant majorization of  Theorem \ref{thm:DME}. Notice that no semiconvex regularization is needed and that is why the domain of the integral error term can be left as $E^{+}(-h) = E^{-}(h) \subset \Omega$.
\end{rem}
	
	\section{Examples of $I$-central G\aa rding-Dirichlet polynomial operators}\label{sec:examples}
	
	In this section, we first recall various constructions (presented in \cite{HL09} and \cite{HL24}, for example), which illustrate the immensity of the family of $I$-central G\aa rding-Dirichlet polynomial operators  to which the main Theorem \ref{thm:ABP_P1} applies. Then we will discuss various examples which form interesting building blocks for our results. In the following section, we will discuss an extension of the class of $I$-central polynomial operators (which are not necessarily G\aa rding-Dirichlet) for which our method applies.
	
	\subsection{Building $I$-central G\aa rding-Dirichlet operators}
	
	The most classical example of a G\aa rding polynomial on $\cS(n)$ is the determinant, which is also an $I$-central G\aa rding-Dirichlet polynomial in the sense of Definition \ref{defn:ICGD_poly}.
	
	\begin{exe}[Monge-Amp\`ere operator]\label{exe:MA}
		The polynomial $\mathfrak{g}(A):= {\rm det}(A)$ for $A \in \cS(n)$ is homogeneous of degree $n$ and is $I$-hyperbolic since, for each $t \in \R$,
		$$
		\mathfrak{g}(A + tI) = {\rm det}(A + tI) = \prod_{k=1}(t + \lambda_k(A)),
		$$
		so that the G\aa rding $I$-eigenvalues are the standard real eigenvalues of $A \in \cS(n)$. The (open) G\aa rding cone is $\Gamma = \cP = \{A \in \cS(n):  A > 0 \}$ so that $\mathfrak{g}$ is G\aa rding-Dirichlet. Finally, a simple calculation shows that $D_I \mathfrak{g} = I$, so that $\mathfrak{g}$ is $I$-central.
		
		The determinant majorization estimate of Theorem \ref{thm:DME} is an equality
		\begin{equation}\label{DME:MA}
		{\rm det}(A)^{1/n} \geq {\rm det}(I)^{1/n} {\rm det}(A)^{1/n}  = {\rm det}(A)^{1/n}, \quad \forall \, A \geq 0 \ \text{in} \ \cS(n).
		\end{equation}
		The Alexandrov oscillation estimate
		\begin{equation}\label{ABPDG_osc:MA}
		\osc_{\Omega} h \leq \osc_{\partial \Omega} h + \frac{\mathrm{diam}(\Omega)}{|B_1|^{1/n}} || f ||_{L^1(\Omega)},
		\end{equation}
		holds for $\cP$-admissible viscosity solutions $h \in C(\overline{\Omega})$ to ${\rm det}(D^2h) = f$ on $\Omega$ with $f \in C(\Omega)$.
	\end{exe}
	
	Notice that we learn nothing new from the determinant majorization in this real case.
	Other basic examples arise by taking $\mathfrak{g}(A)$ to be either ${\rm det}(A_{\Cn})$ or ${\rm det}(A_{\Hn})$ where $A_{\Cn}$ is the complex hermitian part of $A:=D^2u$ and $A_{\Hn}$ is the quarternionic hermitian part of $A:=D^2u$ on complex or quaternionic manifolds, respectively.
	
	Notice that for the next constructions, as given in Proposition 3.1 of \cite{HL24}, construction (1) lowers the degree of while construction (2) raises the degree.
	
	\begin{rem}[Building $I$-central  G\aa rding-Dirichlet polynomials]\label{rem:constructions} On $\cS(n)$ one has the following constructions.
		\begin{enumerate}
			\item If $\mathfrak{g}$ is a $I$-central G\aa rding-Dirichlet polynomial of degree $N$, then the radial derivatives (also called derivative relaxations by Renegar \cite{R06})
			\begin{equation}\label{relax}
			\mathfrak{g}^{(\ell)}(A):= \left. \frac{d^{\ell}}{dt^{\ell}} \right\vert_{t = 0} \mathfrak{g}(A + tI), \quad \ell \in {1, \ldots N-1}
			\end{equation}
			are $I$-central G\aa rding-Dirichlet polynomials of degree $N - \ell$. The associated G\aa rding cones are nested
			$$
			\Gamma_{\mathfrak{g}} \subset 	\Gamma_{\mathfrak{g}^{(1)}} \cdots \Gamma_{\mathfrak{g}^{(N -1)}} = \Int \, \mathcal{H},
			$$
			where $\mathcal{H}$ is the Laplacian half-space. 
			\item The product $\mathfrak{gh}$ of two $I$-central G\aa rding-Dirichlet polynomials $\mathfrak{g}$ and $\mathfrak{h}$ on $\cS(n)$ is an $I$-central  G\aa rding-Dirichlet polynomial. The degree of  $\mathfrak{gh}$ is the sum of the degrees of $\mathfrak{g}$ and $\mathfrak{h}$ and the G\aa rding cones satisfy $\Gamma^{\mathfrak{gh}} = \Gamma^{\mathfrak{g}} \cap \Gamma^{\mathfrak{h}}$.
			
		\end{enumerate}
	\end{rem}
	See also Proposition 3.1 (3) for an additional construction of G\aa rding-Dirichet polynomials obtained by composing $\mathfrak{g}$ with a linear map $L \in {\rm End}(\cS(n))$, which preserves the degree of $\mathfrak{g}$. If, in 
	addition, $L(I) = kI$ for some $k > 0$ and ${\rm tr} \, A = 0 \ \Rightarrow \ {\rm tr} \, (L(A)) = 0$, then $\mathfrak{g}_L(A):= \mathfrak{g}(L(A))$ is $I$-central if $\mathfrak{g}$ is.
	
	\subsection{Some representative examples}
	
	We begin by noting that the radial derivatives in Remark \ref{rem:constructions} (1) applied to $\mathfrak{g} = {\rm det}$ yield the following interesting family of examples.
	
	\begin{exe}[$k$-Hessian operators]\label{exe:kH} Consider the elementary symmetric polynomials on $\R^n$
		\begin{equation}\label{KESP}
		\sigma_k(\lambda_1, \ldots , \lambda_n) := \sum_{1 \leq i_1 < \cdots < i_k \leq n} \left( \lambda_{i_1} \cdots \lambda_{i_k} \right), \quad k = 1, \ldots n,
		\end{equation}
		which define the homogeneous degree $k$ polynomials on $\cS(n)$ for each $k = 1, \ldots n$ by
		\begin{equation}\label{KHop}
		\mathfrak{g}_k(A) := \sigma_k(\lambda_1(A), \ldots , \lambda_n(A)) := \sum_{1 \leq i_1 < \cdots < i_k \leq n} \left( \lambda_{i_1}(A) \cdots  \lambda_{i_1}(A) \right), \quad A \in \cS(n), 
		\end{equation}
		where $\lambda_i(A)$ are the usual eigenvalues of $A$. 
		An easy calculation gives
		$$
		\mathfrak{g}^{(k)}(A):= \left. \frac{d^{k}}{dt^{k}} \right\vert_{t = 0} \mathfrak{g}(A + tI) = \mathfrak{g}_k(A), \quad k \in {1, \ldots n-1},
		$$
		proving that radial derivatives radial derivatives of ${\rm det}$ define the $k$-Hessian operators $\mathfrak{g}_k$ for $k = 1, \ldots n -1$.
		
		The associated G\aa rding cones are given by
		$$
		\Gamma_k := \{ A \in \cS(n): \sigma_j(\lambda_1(A), \ldots, \lambda_n(A)) > 0, \ j = 1, \ldots k \},
		$$
		whose subharmonics (as studied in \cite{BP21}) are sometimes called {\em $k$-convex functions} (beginning with \cite{TW97}; see also \cite{TW99}). Since $\mathfrak{g}_k(I) = \binom{n}{k}$, the determinant majorization estimate of Theorem \ref{thm:DME} yields (for each fixed $k = 1, \ldots, n$)
		\begin{equation}\label{DME:KH}
		\mathfrak{g}_k(A)^{1/k} \geq {\binom{n}{k}}^{1/k} {\rm det}(A)^{1/n}, \quad \forall \, A \geq 0 \ \text{in} \ \cS(n),
		\end{equation}
		which gives rise to the Alexandrov oscillation estimate
		\begin{equation}\label{ABPDG_osc:KH}
		\osc_{\Omega} h \leq \osc_{\partial \Omega} h + \frac{\mathrm{diam}(\Omega)}{\binom{n}{k}^{1/k} |B_1|^{1/n}} || f^{1/k} ||_{L^n(\Omega)}
		\end{equation}
		for $\overline{\Gamma}_k$-admissible viscosity solutions $h \in C(\overline{\Omega})$ to the inhomogeneous $k$-Hessian equation $\mathfrak{g}_k(D^2h) = f$ on $\Omega$ with $f \in C(\Omega)$.
	\end{exe}
	
	It is worth commenting on the determinant majorization inequality \eqref{DME:KH} of the family of examples in Example \ref{exe:kH}.
	
	\begin{rem}\label{rem:MacLaurin}[On determinant majorization by $k$-hessians] The family of determinant majorization inequalities \eqref{DME:KH} includes the two  ``extreme cases'' of $k = n$ and $k = 1$. When $k = n$, we have $\mathfrak{g}_n = {\rm det}$ and \eqref{DME:KH} is the equality \eqref{DME:MA} of Example \ref{exe:MA}. When $k = 1$, we have $\mathfrak{g}_1 = {\rm tr}$ and \eqref{DME:KH} is the matrix form of the {\em arithmetic-geometric mean inequality}
		\begin{equation}\label{AM-GM}
		{\rm tr}(A) \geq  {\rm det}(A)^{1/n}, \quad \forall  \, A \geq 0 \ \text{in} \ \cS(n).
		\end{equation}
		This inequality plays a key role in proving Alexandrov estimates for linear elliptic operators with variable coefficients (see, for example Section 2.5 of \cite{HanLin11}).
		
		Moreover, for the determinant majorization of \eqref{DME:KH} is actually a special case (with $\ell = n$) of the well-known {\em  Maclaurin's inequality}: for $1 \leq k \leq \ell$, one has
		\begin{equation}\label{MI}
		\frac{\mathfrak{g}_k(A)^{1/k}}{{\binom{n}{k}}^{1/k}} \geq \frac{\mathfrak{g}_{\ell}(A)^{1/{\ell}}}{{\binom{n}{\ell}}^{1/{\ell}}}, \quad \forall \, A \geq 0 \ \text{in} \ \cS(n).
		\end{equation}
		The reader might like to consult the recent paper of Tao \cite{T25}, which gives a variant of \eqref{MI} for the elementary symmetric polynomials \eqref{KESP} without requiring that the (eigenvalues) $\lambda_i$ be non negative.
	\end{rem}
	
	\begin{exe}[$p$-fold sum operators]\label{exe:pFS} Consider the following symmetric polynomials on $\R^n$
		\begin{equation}\label{pfS}
		\tau_k(\lambda_1, \ldots \lambda_n) := \prod_{1 \leq i_1 < \cdots < i_p \leq n} \left( \lambda_{i_1} + \cdots + \lambda_{i_p} \right), \quad p = 1, \ldots n,
		\end{equation}
		which define the homogeneous degree polynomials of degree $N = \binom{n}{p}$ on $\cS(n)$ for each $p = 1, \ldots n$ by
		\begin{equation}\label{pfSop}
		\mathfrak{g}_p(A) := \tau_p(\lambda_1(A), \ldots , \lambda_n(A)) := \prod_{1 \leq i_1 < \cdots < i_k \leq n} \left( \lambda_{i_1}(A) + \cdots + \lambda_{i_p}(A) \right), \quad A \in \cS(n), 
		\end{equation}
		where $\lambda_i(A)$ are the usual eigenvalues of $A$. 
		
		This operator was introduced and studied in \cite{HL09}. It is geometrically significant because its subharmonics are precisely the upper semicontinuous functions that restrict to all affine $p$-planes as classical (Laplacian) subharmonics.
		
		For each $p = 1, \ldots, n$, $\mathfrak{g}_p$ defines an $I$-central G\aa rding-Dirichlet operator as noted in Example 3.3 of \cite{HL23}. For the benefit of the reader, we review why this is so. One easily checks that they are G\aa rding, where the $N = \binom{n}{p}$ G\aa rding $I$-eigenvalues are the factors in the product of \eqref{pfSop}. They are aslo G\aa rding-Dirichlet where the (closed) G\aa rding cones (after ordering the eigenvalues $\lambda_1(A) \leq \cdots \leq \lambda_n(A)$) are given by
		\begin{equation}\label{pfS_cones1}
		\overline{\Gamma}_p := \{ A \in \cS(n): \lambda_1(A) + \cdots \lambda_p(A) \geq 0\}
		\end{equation}
		and are nested
		\begin{equation}\label{pfS_cones2}
		\cP = \overline{\Gamma}_1 \subset \cdots \subset \overline{\Gamma}_p \subset \cdots \subset \overline{\Gamma}_n = \mathcal{H},
		\end{equation}
		where $\mathcal{H}$ is again the (Laplacian) half-space and $\cP$ is the convexity constraint set. In fact, this family of examples gives another interpolation between the Laplacian and the Monge-Ampere operator where here $\mathfrak{g}_1 = {\rm det}$ and $\mathfrak{g}_n = {\rm tr}$.
		
		To see that the operators are $I$-central, it is enough to note that $\mathfrak{g}_p$ is $\mathcal{O}(n)$ invariant (see Proposition E.5 of \cite{HL23}) where the constant $k$ of Definition \ref{defn:ICGD_poly} (3) is $k = \mathfrak{g}_p(I) N/n$ with $N = \binom{n}{p}$ (see Theorem E.8 of \cite{HL23}, where the normalization $\mathfrak{g}(I) = 1$ is assumed.)
		
		Since $\mathfrak{g}_p(I) = pN := p \binom{n}{p}$, the determinant majorization estimate of Theorem \ref{thm:DME} yields (for each fixed $p = 1, \ldots, n$)
		\begin{equation}\label{DME:pfS}
		\mathfrak{g}_p(A)^{1/N} \geq (pN)^{1/N} {\rm det}(A)^{1/n}, \quad \forall \, A \geq 0 \ \text{in} \ \cS(n), \quad \text{where} \ N = \binom{n}{p},
		\end{equation}
		which gives rise to the Alexandrov oscillation estimate
		\begin{equation}\label{ABPDG_osc:pfS}
		\osc_{\Omega} h \leq \osc_{\partial \Omega} h + \frac{\mathrm{diam}(\Omega)}{(pN)^{1/N} |B_1|^{1/n}} || f^{1/N} ||_{L^n(\Omega)}, \quad \text{where} \ N = \binom{n}{p},
		\end{equation}
		for $\overline{\Gamma}_p$-admissible viscosity solutions $h \in C(\overline{\Omega})$ to the inhomogeneous $p$-fold sum equation $\mathfrak{g}_p(D^2h) = f$ on $\Omega$ with $f \in C(\Omega)$. 
		
		It should be noted that the determinant majorization estimate \eqref{DME:pfS} in this case was first proven by Dinew in \cite{Di23}.
	\end{exe}
	
	
	\section{Extensions to non-G\aa rding polynomial operators}\label{sec:NG_Ops}
	
	Up to now, we have considered inhomogeneous equations $\mathfrak{g}(D^2 u) = f$ where $\mathfrak{g}$ is an $I$-central G\aa rding-Dirichlet polynomial. This class of polynomial operators satisfies two crucial ingredients for our approach; namely, a sufficient condition for the validity of the determinant majorization estimate of Theorem \ref{thm:DME} and a well-developed operator theory as presented in Remark \ref{rem:DGOps1} and Remark \ref{rem:DGOps2}. We can, and will, split these two elements into two separate questions using considerations which were first presented in \cite{HL24}. We will consider inhomogeneous equations of the form
	\begin{equation}\label{inhom}
	G(D^2_x u) = f(x), \quad x \in \Omega \subset \subset \R^n
	\end{equation}
	where $G$ is a homogeneous polynomial of degree $N$ on $\cS(n)$ which is $I$-central in the sense of Definition \ref{defn:ICGD_poly} (3), but not necessarily G\aa rding in the sense of Definition \ref{defn:ICGD_poly} (1). 
	
	The main points are two. First, if $G$ is an $I$-central polynomial and satisfies an additional {\bf {\em coefficient condition}} (see condition (ii) of Theorem \ref{thm:DME:NG}), determinant  majorization continues to hold. 
	Second, additional assumptions on $G$ can ensure that $F(x,A):= G(A) - f(x)$ has an operator theory suitable for our approach. More precisely, we can ensure that $G$ continues to be degenerate elliptic (as defined in Remark \ref{rem:DGOps1}(b)) 
	and tame (as defined in (as defined in Remark \ref{rem:DGOps2}(a)). The tameness of $G$ can be then used to prove the fiberegularity of the pair $(F, \cG)$ as in Remark \ref{rem:correspondence}. This fiberegularity then gives the needed correspondence principle and semiconvex approximation of subharmonics. These ingredients are automatic for G\aa rding-Dirichlet polynomials $G = \mathfrak{g}$ and $f$ continuous and non-negative.
	
	We begin with the following alternative to Theorem \ref{thm:DME}, which is Theorem 2.3 of  \cite{HL24}. In what follows, we denote by $\mathcal{O}(n) \subset \cS(n)$ the set of orthogonal matrices and 
	$$
	\mathcal{D}(n) := \{ X \in \cS(n): \ X = {\rm diag}(x_1, \ldots, x_n)\}
	$$
	the set of diagonal matrices. 
	
	\begin{thm}[Determinant majorization for non-G\aa rding $I$-central polynomials]\label{thm:DME:NG} Suppose that $G$ is a homogeneous polynomial of degree $N$ on $\cS(n)$ which satisfies the follwoing two conditions:
		\begin{itemize}
			\item[(i)] $G$ is $I$-central in the sense of Definition \ref{defn:ICGD_poly} (3); that is,
			\begin{equation}\label{IC_poly}
			\exists \, k> 0 \ \text{such that} \ D_I G = k I;
			\end{equation}
			\item[(ii)] For every $\tau \in \mathcal{O}(n)$ the polynomial $\mathfrak{p}_{\tau}$ on $\R^n$ defined by
			\begin{equation}\label{pO}
			\mathfrak{p}_{\tau}(x_1, \ldots, x_n) := G(\tau X \tau^t), \quad X = {\rm diag}(x_1, \ldots, x_n) \in \mathcal{D}(n)
			\end{equation} 
			has non negative coefficients.\\
			This will be referred to as the \underline{coefficient condition} for $G$.
		\end{itemize}
		Then one has the determinant majorization
		\begin{equation}\label{DME:NG}
		G(A)^{1/N} \geq 	G(I)^{1/N} \left( {\rm det} A \right)^{1/n}, \quad \forall \, A \geq 0 \ \text{in} \ \cS(n).  
		\end{equation}
	\end{thm}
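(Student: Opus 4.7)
The plan is to reduce everything to a weighted arithmetic--geometric mean inequality applied to the monomial expansion of $G$ in a diagonalizing frame, with the coefficient condition guaranteeing non-negativity of the weights and the $I$-centrality condition guaranteeing that the weighted averages of the exponents come out to exactly $N/n$.

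First I would diagonalize: since $A \geq 0$ in $\cS(n)$, write $A = \tau X \tau^t$ with $\tau \in \mathcal{O}(n)$ and $X = {\rm diag}(\lambda_1,\ldots,\lambda_n)$, $\lambda_i \geq 0$ the eigenvalues of $A$. Then $G(A) = \mathfrak{p}_\tau(\lambda_1,\ldots,\lambda_n)$. Expand
\begin{equation*}
\mathfrak{p}_\tau(x) = \sum_{|\alpha|=N} c_\alpha\, x_1^{\alpha_1}\cdots x_n^{\alpha_n},
\end{equation*}
where by hypothesis (ii) every $c_\alpha \geq 0$, and by homogeneity $|\alpha|=N$.

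Next I would extract the information contained in $I$-centrality. Evaluating $\mathfrak{p}_\tau$ at $(1,\ldots,1)$ gives $\sum_\alpha c_\alpha = \mathfrak{p}_\tau(1,\ldots,1) = G(\tau I \tau^t) = G(I)$. For the first derivatives,
\begin{equation*}
\frac{\partial \mathfrak{p}_\tau}{\partial x_i}(1,\ldots,1) = \bigl\langle D_I G,\, \tau E_{ii}\tau^t\bigr\rangle = \bigl\langle kI,\, \tau E_{ii}\tau^t\bigr\rangle = k\,{\rm tr}(E_{ii}) = k \quad \text{for each } i = 1,\ldots, n,
\end{equation*}
by the $I$-central hypothesis (i). Comparing with the monomial expansion yields $\sum_\alpha c_\alpha \alpha_i = k$ for every $i$, while Euler's relation (or summing over $i$) gives $k = NG(I)/n$. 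So if we set $p_\alpha := c_\alpha / G(I) \geq 0$, then $\{p_\alpha\}$ is a probability distribution on the set of exponents, and
\begin{equation*}
\sum_\alpha p_\alpha \alpha_i = \frac{k}{G(I)} = \frac{N}{n} \quad \text{for every } i = 1,\ldots,n.
\end{equation*}

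Finally I would close with weighted AM--GM. Since $\lambda_i \geq 0$ and $p_\alpha \geq 0$ with $\sum_\alpha p_\alpha = 1$,
\begin{equation*}
\frac{G(A)}{G(I)} = \sum_\alpha p_\alpha\, \lambda^\alpha \;\geq\; \prod_\alpha (\lambda^\alpha)^{p_\alpha} = \prod_{i=1}^n \lambda_i^{\sum_\alpha p_\alpha \alpha_i} = \prod_{i=1}^n \lambda_i^{N/n} = ({\rm det}\, A)^{N/n},
\end{equation*}
and raising to the power $1/N$ yields \eqref{DME:NG}.

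The substantive step is the derivative computation that extracts $\sum_\alpha c_\alpha \alpha_i = k$ from the $I$-central hypothesis; this is what ensures the exponents of the geometric mean land on the exact values needed to recover $({\rm det}\, A)^{1/n}$. The coefficient condition is used only to legitimize treating the $c_\alpha$ as non-negative weights in AM--GM (without it, cancellation between monomials of varying signs would destroy the inequality even when $G(A)\geq 0$). No obstacle beyond bookkeeping is expected; the proof is essentially two lines once the weighted-AM--GM viewpoint is adopted.
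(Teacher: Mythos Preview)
Your proof is correct. The paper itself does not supply a proof of this theorem but cites it as Theorem~2.3 of \cite{HL24}; your argument---diagonalize, expand in monomials with non-negative coefficients, use the chain-rule identity $\partial_{x_i}\mathfrak{p}_\tau(1,\ldots,1)=\langle D_I G,\tau E_{ii}\tau^t\rangle=k$ to pin down the weighted exponent averages as $N/n$, then apply weighted AM--GM---is the natural route and handles the boundary case $\det A=0$ automatically since $G(A)\geq 0$ by the coefficient condition.
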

	
	For polynomial operators $G$ satisfying the hypotheses of Theorem \ref{thm:DME:NG}, the validity of the Alexandrov estimates (like those in Theorem \ref{thm:ABP_P1}) depends on establishing that there exists a subequation $\G = \R \times \R^n \times \mathcal{A} \subset \R \times \R^n \times \cS(n)$ such that $(F, \G)$ is a compatible and fiberegular $\cM$-monotone operator-subequation pair where the operator $F$ is defined by $F(x,A) := G(A) - f(x)$. This will guarantee the validity of a correspondence principle analogous to Theorem \ref{thm:CPP} for $\G$-admissible solutions to \eqref{inhom} as well as the semiconvex approximation result of Lemma \ref{lem:QCA} $\wt{\F}$-subharmonics with $\wt{\F}$ defined by
	\begin{equation*}\label{Fdualsub_NGG}
	\wt{\F}_x := \{(r,p,A) \in  \cJ^2: \ \mbox{$-A \not\in \mathcal{A}$ or  [$-A \in \mathcal{A}$ and $G(-A) - f(x) \leq 0$]}\}, \quad x \in \Omega.
	\end{equation*}  . The following example was introduced in \cite{HL24} and is 
	illustrative of what what can be done 
	
	\begin{exe}\label{exe:NGMA} Consider the homogeneous polynomial operator of degree $2 + n$
		\begin{equation}\label{PMA1}
		G(A):= ||A||^2 {\rm det}(A), \quad A \in \cS(n).
		\end{equation} 
	\end{exe}
	
	\begin{rem}[Failure to be G\aa rding]\label{rem:NG}
		The polynomial defined by \eqref{PMA1} fails to be G\aa rding; that is, it fails to be $I$-hyperbolic since, for each fixed  $A \in \cS(n)$ and $t \in \R$,
		\begin{equation}\label{PMA2}
		G(A + tI) = || A + tI||^2 +  {\rm det}(A + tI) = \left[ ||A||^2 + 2 ({\rm tr}\, A)t + t^2 \right] \prod_{k=1}^n \left(t + \lambda_k(A) \right)
		\end{equation}
		will have real roots in the first factor only when $| {\rm tr} \, A| \geq ||A||$. However, it is $I$-central where
		$$
		D_I G = (2 + n)I
		$$
		and does satisfy the coefficient condition (ii) of Theorem \ref{thm:DME:NG} since for each $\tau\in \mathcal{O}(n)$
		$$
		\mathfrak{p}_{\tau}(x_1, \ldots, x_n) := G(\tau X \tau^t) = |x|^2 \prod_{k=1}^n x_k, \quad \forall \, X = {\rm diag}(x_1, \ldots, x_n) \in \mathcal{D}(n),
		$$
		has non-negative coefficients.
	\end{rem}
	
	
	In order to apply the determinant majorization estimate of Theorem \ref{thm:DME:NG} to (admissible) viscosity solutions of the inhomogeneous equation \eqref{thm:DME:NG} with $G$ defined by \eqref{PMA1}, we will need a good operator theory for the operator $F(x,A):= G(A) - f(x)$ with respect to a natural admissibility constraint $\cG$. 
	
	We first discuss the operator theory associated to $G$. The polynomial operator $G$ defined by \eqref{PMA1} has a natural associated subequation $\G:= \R \times \R^n \times \cP$, which renders $G$ degenerate elliptic when using $\cG$ as an admissibility constraint. It is straightforward to check that $(G,\G)$ is an $\cM$-monotone operator subequation pair with monotonicity cone subequation $\cM = \cM(\cP)$ where $\cM:= \R \times \R^n \times \cP = \G$. Moreover, the pair is clearly compatible since for each
	$$
	\partial \G = \{ (r,p,A) \in \G: G(A) = 0\}.
	$$
	Moreover, $G$ is tame on $\cG$ which is the last key ingredient.
	
	\begin{rem}[Tameness of $G$]\label{rem:tameness} The operator defined by $G(A):=||A||^2 {\rm det}(A)$ satisfies
		\begin{equation}\label{G_tame}
		G(A + \eta I) - G(A) \geq n \eta^{n+2}, \quad \forall \, A \in \cP, \forall \, \eta \geq 0;
		\end{equation} 
	that is, $G$ is tame on $\cP \subset \cS(n)$. To see that \eqref{G_tame} holds, notice first that
	\begin{equation*}\label{Gtame1}
	||A + \eta I||^2 = \langle A + \eta I, A + \eta I \rangle = ||A||^2 + 2 \eta \, {\rm tr} A + n \eta^2,
	\end{equation*}
	so that 
		\begin{equation}\label{Gtame2}
G(A + \eta I) - G(A) = 	\left( ||A||^2 + 2 \eta \, {\rm tr} A + n \eta^2 \right) {\rm det}(A + \eta I) - ||A||^2 {\rm det} A. 
	\end{equation}	
Next, use the well-known expansion in terms of the elementary symmetric functions $\sigma_k$ (where $\sigma_1 = {\rm tr}$ and $\sigma_n = {\rm det}$):
\begin{equation}\label{Gtame3}
{\rm det}(A + \eta I) = \sum_{k = 1}^n \eta^{n -k} \sigma_k(A) = {\rm det} A   +  \eta \sigma_{n-1}(A) + \ldots + \eta^n.
\end{equation}
Since 
\begin{equation}\label{Gtame4}
\sigma_k(A) \geq 0, \quad \forall \, k = 1, \ldots n, \forall \, A \in \cP,
\end{equation}
combining \eqref{Gtame2}-\eqref{Gtame4}, one obtains: $\forall \, A \in \cP, \forall \, \eta \geq 0$,
$$
G(A + \eta I) - G(A) \geq 	\left( ||A||^2 + n \eta^2 \right) \left(  {\rm det} A   +  \eta  \eta^n \right) - ||A||^2 {\rm det} A = n \eta^{n+2}, 
$$
which is \eqref{G_tame}.	
	\end{rem} 
It is now straightforward to show the following result for the model operator of Example \ref{exe:NGMA}.

	\begin{thm}\label{thm:ANG}
		Suppose that $\Omega \subset \R^n$ is open and bounded, $f \in C(\Omega)$ is non-negative and $G$ is the polynomial on $\cS(n)$ defined by $G(A):= ||A||^2 \, {\rm det}(A)$. Denoting by $\cG:= \R \times \R^n \times \cP$,  the following statements hold.
		\begin{itemize}
			\item[(a)] The constraint sets defined fiberwise by
			\begin{equation}\label{Fsub_NG}
			\F_x := \{(r,p,A) \in  \cJ^2:  A \in \G \quad \text{and} \quad G(A) - f(x) \geq 0\}, \quad x \in \Omega
			\end{equation}
			and 
			\begin{equation}\label{Fdualsub_NG}
			\wt{\F}_x :=\{(r,p,A) \in  \cJ^2: -A \not\in \G  \ \text{or} \  [-A \in \G \ \text{and} \ G(-A) - f(x) \leq 0] \}, \quad x \in \Omega.
			\end{equation}
			are $\cM$-monotone fiberegular subequations for the monotonicity cone subequation $\cM = \cM(\cP) = \R \times \R^n \times \cP = \cG$.
			\item[(b)] The correspondence principle (as stated in Theorem \ref{thm:CPP}) holds for solutions of the inhomogeneous equation 
			\begin{equation}\label{DGE_NG}
			G(D^2h) = f \quad \mbox{with the admissibility constraint $D^2h \in \cG$}.
			\end{equation}
		\end{itemize}
		Combining parts (a) and (b) with the determinant majorization of Theorem \ref{thm:DME:NG}, one has the following Alexandrov estimates.
		\begin{itemize}
			\item[(c)] If $u \in \USC(\overline{\Omega})$ is an admissible subsolution of \eqref{DGE_NG} on $\Omega$ (equivalently, $u$ is $\F$-subharmonic on $\Omega$ for $\F$ defined by \eqref{Fsub_NG}) then
			\begin{equation}\label{ABPDG_sub_NG}
			\sup_{\Omega} u \leq \sup_{\partial \Omega} u;
			\end{equation}
			that is, the maximum principle ${\rm (MP)}$ holds.
			\item[(d)] If  $v \in \LSC(\overline{\Omega})$ is an admissible supersolution of \eqref{DGE_NG} on $\Omega$ (equivalently, $v$ is $\F$-superharmonic on $\Omega$) then
			\begin{equation}\label{ABPDG_super_NG}
			\inf_{\Omega} v \geq \inf_{\partial \Omega} v - \frac{\mathrm{diam}(\Omega)}{|B_1|^{1/n}} || f ^{1/(2+n)}||_{L^n(\Omega)}.
			\end{equation}
			Equivalently, $w:=-v \in \USC(\Omega)$ is $\wt{\F}$-subarmonic on $\Omega$ with $\wt{\F}$ defined by \eqref{Fdualsub_NG} and satisfies 
			\begin{equation}\label{ABPDG_dual_NG}
			\sup_{\Omega} w \leq \sup_{\partial \Omega} w + \frac{\mathrm{diam}(\Omega)}{|B_1|^{1/n}} || f ^{1/(2+n)}||_{L^n(\Omega)}.
			\end{equation}
			\item[(e)] If  $h \in C(\overline{\Omega})$ is an admissible solution of \eqref{DGE_NG} in $\Omega$ (equivalently, $h$ is $\F$-harmonic on $\Omega$) then
			\begin{equation}\label{ABPDG_osc_NG}
			\osc_{\Omega} h \leq \osc_{\partial \Omega} h + \frac{\mathrm{diam}(\Omega)}{|B_1|^{1/n}} || f ^{1/(2+n)}||_{L^n(\Omega)}.
			\end{equation}
		\end{itemize}
	\end{thm}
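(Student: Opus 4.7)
The plan is to verify that $(G,\mathcal{G})$ with $\mathcal{G} = \mathbb{R} \times \mathbb{R}^n \times \overline{\mathcal{P}}$ has all the operator-theoretic properties that were used in the Gårding-Dirichlet setting, and then replay the four-step proof of Theorem \ref{thm:ABP_P1} verbatim, substituting Theorem \ref{thm:DME:NG} for Theorem \ref{thm:DME}. The new ingredient to check is that everything that was automatic for $(\mathfrak{g},\overline{\Gamma})$ still holds for the non-Gårding polynomial $G(A) = \|A\|^2 \det(A)$ with the natural admissibility constraint $\overline{\mathcal{P}}$.

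For part (a), I would first verify that $G$ is degenerate elliptic on $\overline{\mathcal{P}}$: for $A \in \overline{\mathcal{P}}$ and $P \geq 0$, the identity $\|A+P\|^2 = \|A\|^2 + 2\,\mathrm{tr}(AP) + \|P\|^2$ together with $\mathrm{tr}(AP) \geq 0$ for $A,P \in \overline{\mathcal{P}}$ gives monotonicity of the first factor, and monotonicity of $\det$ on $\overline{\mathcal{P}}$ handles the second factor. Compatibility $\partial \overline{\mathcal{P}} = \{A \in \overline{\mathcal{P}}: G(A)=0\}$ is immediate since $\det(A) = 0$ on $A \in \overline{\mathcal{P}}$ iff $A \in \partial \overline{\mathcal{P}}$. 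The $\mathcal{M}(\mathcal{P})$-monotonicity of the pair $(F,\mathcal{G})$ follows from degenerate ellipticity together with the pure second order form of $F$. The key fiberegularity property \eqref{fibereg1} is then deduced exactly as in Remark \ref{rem:correspondence}: by the tameness estimate \eqref{G_tame} of Remark \ref{rem:tameness} and the uniform continuity of $f$ on precompact subsets of $\Omega$, one obtains a modulus of continuity $\delta = \delta_{f,\Omega'}(\eta^{n+2})$. Theorem 7.11 of \cite{CPR23} then gives that $\mathcal{F}$ is an $\mathcal{M}(\mathcal{P})$-monotone fiberegular subequation, and duality transfers this to $\widetilde{\mathcal{F}}$.

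Part (b) is then obtained by invoking Theorem 7.8 of \cite{CPR23} exactly as in the derivation of Theorem \ref{thm:CPP}. For part (c), I would simply note that admissible subsolutions satisfy $J^{2,+}_x u \subset \mathcal{F}_x \subset \mathbb{R} \times \mathbb{R}^n \times \overline{\mathcal{P}}$, hence $u$ is $\overline{\mathcal{P}}$-subharmonic, i.e., convex in the viscosity sense. Since $\overline{\mathcal{P}} \subset \{A \in \mathcal{S}(n) : \mathrm{tr}\, A \geq 0\}$, the argument of Lemma \ref{lem:Gamma_MP} applies directly and yields the maximum principle \eqref{ABPDG_sub_NG}.

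For parts (d) and (e), I would replay the four-step proof of Theorem \ref{thm:ABP_P1}(b) with only cosmetic changes: reduce to bounded $w$ by truncation $w_m = \max\{w,-m\}$ (which remains $\widetilde{\mathcal{F}}$-subharmonic since $-f \leq 0$ on $\Omega$ and $0 \in \overline{\mathcal{P}}$); construct the perturbed sup-convolutions $w^\varepsilon_\eta = w^\varepsilon + \eta \psi$ and invoke Lemma \ref{lem:QCA}, whose hypotheses are exactly the $\mathcal{M}(\mathcal{P})$-monotonicity and fiberegularity of $\widetilde{\mathcal{F}}$ established in part (a); on $E^+(w^\varepsilon_\eta) \cap \Omega_\delta$ combine $-D^2 w^\varepsilon_\eta \in \mathcal{P}$, the $\widetilde{\mathcal{F}}$-subharmonicity inequality $G(-D^2 w^\varepsilon_\eta) \leq f$, and the determinant majorization of Theorem \ref{thm:DME:NG} (with $N = n+2$) to bound $|\det D^2 w^\varepsilon_\eta| \leq f^{n/(n+2)}/G(I)^{n/(n+2)}$; insert this in the Alexandrov estimate of Theorem \ref{thm:ABP_lqc}; pass to the limits $\varepsilon \to 0$, $\delta \to 0$, $\eta \to 0$ exactly as in Step 4. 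The oscillation estimate (e) then follows from (c) and (d) by subtraction, applied to $u = h$ and $w = -h$.

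The main obstacle is purely bookkeeping: the only thing one must really check ex novo for $G$ is the tameness estimate, and that has already been carried out in Remark \ref{rem:tameness}. Once tameness is in hand, the fiberegularity and the semiconvex approximation machinery from \cite{CPR23} apply formally, and the rest of the argument is a transcription of the Gårding-Dirichlet proof with the non-Gårding version of determinant majorization from Theorem \ref{thm:DME:NG}.
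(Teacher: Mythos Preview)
Your proposal is correct and follows essentially the same route as the paper's proof: verify the operator-theoretic ingredients (degenerate ellipticity, compatibility, $\mathcal{M}(\mathcal{P})$-monotonicity, and fiberegularity via the tameness estimate of Remark \ref{rem:tameness}), invoke the relevant results from \cite{CPR23} for the subequation structure and correspondence principle, observe that admissible subsolutions are convex (hence Laplacian subharmonic) for the maximum principle, and then rerun the four-step argument of Theorem \ref{thm:ABP_P1} with Theorem \ref{thm:DME:NG} in place of Theorem \ref{thm:DME}. The only minor discrepancy is that the paper cites Theorem 7.4 of \cite{CPR23} for part (b) whereas you cite Theorem 7.8, and you leave the factor $G(I)^{1/(n+2)}$ implicit rather than computing it; neither affects the substance of the argument.
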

	
	\begin{proof}
		For the claims in part (a), the constraint sets are clearly $\cM$ monotone for $\cM = \R \times \R^n \times \cP$ and hence satisfy two of the three subequation axioms; namely positivity (P) and negativity (N). For the remaining axiom of topological stability (T), it suffices to show (see, for example, Theorem 7.11 of \cite{CPR23}) that $(F, \cG)$ is a fiberegular operator subequation pair,  where $F$ is the operator $F(x,A):= G(A) - f(x)$.
		
		As recalled in Remark \ref{rem:correspondence} (2), since $\cG$ is fiberegular (it has constant fibers) and the pair is $\cM$-monotone, the fiberegularity of the pair $(F, \cG)$ follows from the uniform continuity of $f$ on relatively compact subsets $\Omega'$ of $\Omega$ and the tameness of $G$ (which holds by Remark \ref{rem:tameness}).

		The correspondence principle of part (b) follows from the fact that $(F, \G)$ is a compatible proper elliptic operator-subequation pair and the constraint set $\F$ defined by \eqref{Fsub_NG} is a subequation by part (a)  (see Theorem 7.4 of \cite{CPR23}).
		
		The Alexandrov estimates of parts (c), (d) and (e) are proven exactly as in Theorem \ref{thm:ABP_P1} where the determinant majorization of Theorem \ref{thm:DME:NG} replaces that of Theorem \ref{thm:DME}. Notice that for maximum principle of part (c) continues to hols as the admissible subsolutions are convex. Notice also that parts (a) and (b) ensure that the semiconvex approximation for dual subharmonics of Lemma \ref{lem:QCA} extends to this example. Finally, notice that $G(I) = 1$ in this example, so that the form of the constant in the integral term of \eqref{ABPDG_dual_NG} and \eqref{ABPDG_osc_NG} simplifies.

	\end{proof}
	\begin{rem} As noted in \cite{HL24}, the above 
		example should generalize to $G(A) := ||A||^2 \mathfrak{g}(A)$ for any $I$-central G\aa rding-Dirichlet polynomial of degree $N$ on $\cS(n)$. The failure, in general, to be an $I$-hyperbolic polynomial again follows from \eqref{PMA2}, where one merely replaces the standard eigenvalues $\lambda_k(A) \in \R$ with the G\aa arding $I$-eigenvalues $\lambda_k^{\mathfrak{g}}(A) \in \R$. The other ingredients which lead to Alexandov estimates proceeds as in Theorem \ref{thm:ANG}.
	\end{rem}
	
	The geometric relevance of the class of operators in Example \ref{exe:NGMA} remains to be investigated. Many variants of Example \ref{exe:NGMA} are obviously possible, also in light of Remark \ref{rem:constructions}. We will not pursue this here.

	\begin{appendix}
		
		\section{Pure second order inhomogeneous equations}\label{sec:App_A}

We give a brief summary of the definitions and conditions necessary to make sense of viscosity solutions for any inhomogeneous equation
	\begin{equation}\label{inhom_A}
G(D^2_x u) = f(x), \quad x \in \Omega \subset \R^n
\end{equation}
associated to a constant coefficient pure second order operator $G$ and an inhomogeneous term $f$. Before proceeding, a simplifying remark is in order.

\begin{rem}[Reduction]\label{rem:AppA} To simplify notation, we will make use of the fact that the equation \eqref{inhom_A} is {\em reduced} in the sense that it only depends on the jet variables $(x,A) \in \Omega \times \cS(n)$ and not on the other jet variables $(r,p) \in \R \times \R^n$ that track dependence on the values of $u$ and on the values of the gradient of $u$ for solutions $u$ of \eqref{inhom_A} (see Chapter 10 of \cite{CHLP20} for an organic discussion of reducing jet variables). We could have done this throughout the paper, but in order to easily apply certain results valid in the general case (dependence on all jet variables), we maintained track of all jet variables up to now.
	\end{rem}

First, one must choose a domain $\cA \subset \cS(n)$ (called the {\em admissibility set}) for $G$. We impose four conditions on the pair $(G,\cA)$.
\begin{itemize}
	\item[(1)] $\cA$ is a {\em subequation}; that is, $\cA$ is a closed subset satisfying {\em positivity}
	\begin{equation*}\label{P}\tag{P}
	A + P \in \cA, \quad \forall \, A \in \cA \ \text{and} \ P \geq 0.
	\end{equation*}
	\end{itemize}
Also,
\begin{itemize}
	\item[(2)] the operator $G \in C(\cA)$ must be {\em degenerate elliptic}; that is, 
	\begin{equation*}\label{DE}\tag{DE}
	G(A + P) \geq G(A), \quad \forall \, A \in \cA \ \text{and} \ P \geq 0.
	\end{equation*}
\end{itemize}
In this paper, we have considered the case where the domain $\cA$ of the operator $G$ is {\em constrained} to be a proper subset of $\cS(n)$, with one exception; namely the linear case $G(A) := \langle E, A \rangle$ for some fixed $E \geq 0$, for which the domain need not be constrained.
\begin{itemize}
	\item[(3)] In the constrained case, where $\cA \subsetneq \cS(n)$, assume that the operator-subequation pair $(G, \cA)$ is {\em compatible}; that is,
		\begin{equation*}\label{C}\tag{C}
	\partial \cA = \{ A \in \cS(n): \ G(A) = 0\} \quad \text{and} \quad \Int \, \cA = \{ A \in \cS(n): \ G(A) > 0\}.
	\end{equation*}
	\end{itemize}

Given an inhomogeneous term $f \in C(\Omega)$ with $f \geq 0$ on a domain $\Omega \subset \R^n$, conditions (1) and (2) on the pair $(G, \cA)$ imply that the constraint set
\begin{equation}\label{Inhom:subeqn}
\F := \{ (x,A) \in \Omega \times \cA: \ G(A) - f(x) \geq 0\}
\end{equation}
is a (pure second order) {\em subequation on $\Omega$} in the sense 
of \cite{HL11} and hence has a well-defined potential theory for its subhrmonics, superharmonics and harmonics.

Subharmonics for $\F$ are the same thing as subsolutions to $G(D^2_xu) = f(x)$. They can be defined via viscosity {\em upper test functions} (see Definition \ref{defn:UCJ}). A function $u \in \USC(\Omega)$ is said to be $\F$-subharmonic in $\Omega$ if for every $x \in \Omega$ and for each upper test function $\varphi$ for $u$ at $x$
\begin{equation}\label{Sub1}
A = D^2 \varphi(x) \in \cA \quad \text{and} \quad (x,A)  \in \F \quad (\text{equivalently} \ G(A) - f(x) \geq 0),
\end{equation}
where the equivalence above defines $u$ being an $\cA$-admissible viscosity subsolution of \eqref{inhom_A}.
This equivalence is one half of the {\em correspondence principle} in this situation.

Before completing the correspondence principle, a fourth condition is necessary on the operator $G$. This rules out rather silly pathologies.
\begin{itemize}
	\item[(4)] (Topological tameness) For each level $c \geq 0$, the level set $\{A \in \cA: \ G(A) = c \}$ has no interior.
	\end{itemize}
\begin{rem}[On topological tameness]\label{rem:TT} Notice the following.
\begin{itemize}
	\item[(a)] Without topological tameness, one certainly will not have uniqueness for the natural Dirichlet problem associated to the equation \eqref{inhom_A} (nor comparison principles which imply uniqueness). Far from it, pick $A$ in the interior of the $c$-level set of $G$ for some $c \geq 0$ and pick a smooth function $\varphi$ (possibly compactly supported). Then, for all $\veps$ sufficiently small, the smooth functions $\frac{1}{2} \langle Ax, x \rangle + \veps \varphi(x)$ all satisfy the inhomogeneous equation $G(D^2u) = c$.
	\item[(b)] Obviously, for polynomial operators $G$ (which are real analytic), this topological tameness requirement is satisfied.  
\end{itemize}
\end{rem}

Under these four assumptions, one has all that is needed to treat viscosity solutions of \eqref{inhom_A} by the potential theoretic method initiated in \cite{HL09}. In particular, one has the natural notion of a {\em dual subharmonic function} defined as those $v \in \USC(\Omega)$ which are subharmonic (in the sense above) for the dual (pure second order) subequation with fibers
$$
 \wt{F}_x := -  \Int \, (\F_x)^c \quad \text{(with interior relative to $\cS(n)$)},
$$
which is equivalent to $w = -v$ being an $\cA$-admissible supersoltion of \eqref{inhom_A}. This is the other half of the correspondence principle. Putting these two halves together,  one concludes that  {\em $\F$-harmonic functions on $\Omega$} are precisely the ($\cA$ constrained)  {\em solutions of $G(D^2u) = f$ on $\Omega$}. 

Various generalizations of this correspondence  principle are known. See Theorem 11.13 of \cite{CHLP20} for
$G$ not necessarily a pure second order operator, but which would require the inhomogeneous term $f$ to be constant. See Theorems 7.4 and 7.11 of \cite{CPR23} for a generalization that allows for nonconstant $f$.

		\section{Proof of Lemma \ref{lem:QCA}}\label{sec:App_B}
		
		For the convenience of the reader, we recall the necessary definitions and the statement of the result. For each $\veps > 0$ and each $\eta > 0$ define the perturbed sup-convolution approximations \eqref{approx1}
		\begin{equation}\label{approx1A}
		w_{\eta}^{\veps}(x) := w^{\veps}(x) +  \frac{\eta}{2} |x|^2 \quad \forall \, x \in \Omega,
		\end{equation}
		where 
		\begin{equation}\label{approx2A}
		w^{\veps}(x) := \sup_{y \in \Omega} \left( w(y) - \frac{1}{2 \veps} |y - x|^2 \right), \quad \forall \, x \in \Omega,
		\end{equation}
		is the $\veps$-sup-convolution of $w$ of \eqref{approx2}, which is $\frac{1}{\veps}$-semiconvex on $\Omega$ and decreases pointwise on $\Omega$ to $w$.
		
		\begin{lem}[Semiconvex approximation of dual subharmonics]\label{lem:QCAA} Suppose that $\wt{\F}$ is an $\cM(\cP)$-monotone and fiberegular subequation (with modulus of continuity $\delta = \delta(\eta)$) on $\Omega$ open and bounded.  Suppose that $w \in \wt{\F}(\Omega)$ is bounded, with $|w| \leq M$ on $\Omega$. Then for each $\eta > 0$ there exists $\veps_{\ast} = \veps_{\ast}(\delta(\eta), M) > 0$ such that the perturbed sup-convolutions defined by \eqref{approx1A} satisfy
			\begin{equation}\label{approxA}
			w_{\eta}^{\veps} \in \wt{\F}(\Omega_{\delta}), \quad \forall \, \veps \in (0, \veps_{\ast}],
			\end{equation}
			where $\Omega_{\delta}:= \{ x \in \Omega: \ {\rm dist}(x, \partial \Omega) > \delta\}$.
		\end{lem}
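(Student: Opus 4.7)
The plan is to exploit the representation of the $\veps$-sup-convolution as a supremum of translates of $w$, and to absorb the resulting dependence on the translation parameter $z$ by means of the quadratic perturbation $\tfrac{\eta}{2}|x|^2$: its Hessian is exactly the matrix $\eta I$ that appears in the fiberegularity of $\wt{\F}$, so the fiberegularity gap is precisely compensated provided $|z|$ stays below the modulus $\delta(\eta)$.

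The change of variables $y = x + z$ in \eqref{approx2A} gives
\[
w^{\veps}(x) = \sup_{z \,:\, x+z \in \Omega} \left( w(x+z) - \tfrac{1}{2\veps}|z|^2 \right).
\]
Using $|w| \leq M$ and the trivial bound $w^{\veps}(x) \geq w(x) \geq -M$, any $z$ that nearly attains the supremum satisfies $\tfrac{1}{2\veps}|z|^2 \leq 2M$, i.e., $|z| \leq 2\sqrt{M\veps}$. Given $\eta > 0$, let $\delta = \delta(\eta)$ be the fiberegularity modulus of $\wt{\F}$ and set $\veps_{\ast} := \delta^2/(16M)$, so that $2\sqrt{M\veps_{\ast}} < \delta$. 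Then for every $\veps \in (0, \veps_{\ast}]$ and every $x \in \Omega_{\delta}$ the effective competitors satisfy $|z| < \delta$ and $x+z \in \Omega$, and one may write
\[
w^{\veps}(x) = \sup_{|z| < \delta,\, x+z \in \Omega} u_z(x), \qquad u_z(x) := w(x+z) - \tfrac{1}{2\veps}|z|^2.
\]

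The core step is to show that, for each such fixed $z$, the perturbed translate $u_z + \tfrac{\eta}{2}|\cdot|^2$ is $\wt{\F}$-subharmonic on $\Omega_{\delta}$. To verify this, I would take an arbitrary upper test function $\varphi$ for $u_z + \tfrac{\eta}{2}|\cdot|^2$ at a point $x_0 \in \Omega_{\delta}$; the map $\tilde\varphi(y) := \varphi(y - z) - \tfrac{\eta}{2}|y - z|^2$ is then an upper test function for $w$ at $y_0 := x_0 + z \in \Omega$. Since $w \in \wt{\F}(\Omega)$ and $\wt{\F}$ is pure second order, $D^2 \tilde\varphi(y_0) = D^2\varphi(x_0) - \eta I$ lies in the Hessian fiber of $\wt{\F}_{y_0}$. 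As $|y_0 - x_0| = |z| < \delta = \delta(\eta)$, the fiberegularity of $\wt{\F}$ (inherited from that of $\F$ by duality with the \emph{same} modulus) gives
\[
D^2 \varphi(x_0) = \bigl( D^2\varphi(x_0) - \eta I \bigr) + \eta I \in \wt{\F}_{x_0},
\]
which is exactly the $\wt{\F}$-subharmonicity condition at $x_0$.

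Finally, $w_{\eta}^{\veps}$ is the pointwise supremum over $\{z : |z| < \delta,\, x+z \in \Omega\}$ of the family $\{u_z + \tfrac{\eta}{2}|\cdot|^2\}$, each member of which is $\wt{\F}$-subharmonic on $\Omega_{\delta}$ by the preceding step. Since $w^{\veps}$ is locally Lipschitz and hence $w_{\eta}^{\veps}$ is continuous, the standard stability of subharmonicity under suprema (Proposition D.1 of \cite{CHLP20}) yields $w_{\eta}^{\veps} \in \wt{\F}(\Omega_{\delta})$, completing the proof. The most delicate point I expect is the test-function transfer step: it is crucial that the Hessian of the quadratic correction $\tfrac{\eta}{2}|x|^2$ is \emph{exactly} $\eta I$, matching the fiberegularity perturbation so that no margin is lost, and that $\veps_{\ast}$ is chosen so that the effective range of $z$ stays strictly inside the fiberegularity window $|z|<\delta$.
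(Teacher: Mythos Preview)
Your argument is correct and follows the same two–step skeleton as the paper's proof (first a ``uniform translation property'' for the perturbed translates $w(\cdot+z)+\tfrac{\eta}{2}|\cdot|^2$, then a supremum over $|z|<\delta$), but you carry out Step 1 by a more direct route. The paper establishes the uniform translation property via the \emph{definitional comparison lemma}: it fixes a $C^2$ strictly $\F$-subharmonic $v$, forms the translated quadratic perturbation $\widehat v_{z;\eta}(x)=v(x+z)+\tfrac{\eta}{2}|x|^2$, and uses fiberegularity of $\F$ to show $\widehat v_{z;\eta}$ stays strictly $\F$-subharmonic, which then feeds back into definitional comparison for $\wt{\F}$. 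You instead work with $\wt{\F}$ directly, transferring an upper test function $\varphi$ for the perturbed translate at $x_0$ to an upper test function for $w$ at $x_0+z$ and invoking the fiberegularity of $\wt{\F}$ (which holds by duality with the same modulus). Your jet-transfer argument is more elementary and avoids the detour through definitional comparison; the paper's route, on the other hand, generalizes verbatim beyond the pure second order case (this is Theorem 4.2 of \cite{CPR23}).

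Two small points. First, your $\tilde\varphi(y)=\varphi(y-z)-\tfrac{\eta}{2}|y-z|^2$ is off by the additive constant $\tfrac{1}{2\veps}|z|^2$ needed for the equality $\tilde\varphi(y_0)=w(y_0)$; this is harmless since $\wt{\F}$ is pure second order and only the Hessian matters. Second, the reference you cite for the final step (Proposition~D.1 of \cite{CHLP20}) is for finite maxima; for an uncountable family one needs the Perron-type statement that the upper semicontinuous envelope of a locally bounded sup of subharmonics is subharmonic (the paper cites Proposition~B.4(viii) of \cite{CPR23}). Your observation that $w^\veps$ is locally Lipschitz, so $w_\eta^\veps$ coincides with its own envelope, then closes the argument.
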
 
		
		Before giving the proof, we briefly describe the main ideas. The proof involves two main steps. Step 1 concerns the so-called {\bf {\em uniform translation property}} that is satisfied by any fiberegular $\cM(\cP)$-monotone subequation. Step 1 is a special case of Theorem 3.3 of \cite{CPR23} adapted to present situation of fiberegular pure second order subequations (where the monotonicity cone subequation is $\cM = \cM(\cP)$) and stated for a dual subequation $\wt{\F}$ (which is also $\cM$-monotone and fiberegular with the same modulus of continuity $\delta$. For the convenience of the reader we present a simplified proof in this special case. Step 2 exploits the uniform translation property together with the boundedness of $w$ in order to represent the quadratically perturbed sup-convolutions $w_{\eta}^{\veps}$ of $w$ as the Perron function for a family of $\wt{\F}$-subharmonics which are locally bounded from above on $\Omega_{\delta}$.

		\begin{proof} Suppose that $\wt{\F}, \Omega$ and $w$ satisfy the hypotheses of the lemma.

			\noindent \underline{Step 1:} (The Uniform Translation Property) {\em For each $\eta > 0$ and for each $z \in \R^n$ with $|z| < \delta$ (where $\delta = \delta(\eta)$ is the modulus of continuity of $\wt{\F}$), the quadratically perturbed translates of $w$ defined by}
			\begin{equation}\label{w_z_eta}
			w_{z; \eta}(x) := w(x-z) + \frac{\eta}{2}|x|^2, \quad x \in \Omega_{\delta}
			\end{equation}
			{\em are $\wt{\F}$-subharmonic on $\Omega_{\delta}$}.
			
			To prove this claim, we make use of the {\em definitional comparison lemma} for $w = w_{z; \eta}$ on $ \Omega_{\delta}$ (see Lemma B.2 of \cite{CPR23}, which holds for \underline{any} subequation on any open set $X \subset \R^n$). In our context, it says that for $w \in \USC(X)$ one has two statements:
			\begin{enumerate}
				\item {\em if $w$ is $\wt{\F}$-subharmonic on $X$, then for every open subset $\Omega \subset \subset X$ and for every $v \in \USC(\overline{\Omega}) \cap C^2(\Omega)$ which is strictly $\F$-subharmonic on $\Omega$}
				\begin{equation}\label{DC1}
				w +v \leq 0 \ \text{{\em on}} \ \partial \Omega \ \ \Rightarrow \ \ w +v \leq 0 \ \text{{\em on}} \ \Omega;
				\end{equation}
				\item {\em if for each $x \in X$ there exists a small ball $B \subset X$ containing $x$ such that for each $v \in \USC(\overline{B}) \cap C^2(B)$ which is strictly $\F$-subharmonic on $B$}
				\begin{equation}\label{DC2}
				w +v \leq 0 \ \text{{\em on}} \ \partial B \ \ \Rightarrow \ \ w +v \leq 0 \ \text{{\em on}} \ B,
				\end{equation}
				{\em then  $w$ is $\wt{\F}$-subharmonic on $X$}.
			\end{enumerate}
			For each $x \in X := \Omega_{\delta}$ we choose $B := B_{\delta}(x) \subset \Omega$. In order to show that $w_{z; \eta}$ is $\wt{\F}$-subharmonic on $\Omega_{\delta}$, by part (2) of definitional comparison \eqref{DC2}, it suffices to show that for each $v \in \USC(\overline{B}) \cap C^2(B)$ which is strictly $\F$-subharmonic on $B$ we have
			\begin{equation}\label{DC3}
			w_{z; \eta} +v \leq 0 \ \text{{\em on}} \ \partial B \ \ \Rightarrow \ \ w_{z; \eta} +v \leq 0 \ \text{{\em on}} \ B.
			\end{equation}
			Notice that for each $x \in \Omega_{\delta}$ and for each $z \in B_{\delta}(0)$ we have the identity
			\begin{equation}\label{NTF1}
			\left(w_{z; \eta} + v \right)(x) := w(x-z) + \frac{\eta}{2}|x|^2 + v(x) = w(x-z) + \widehat{v}_{z;\eta}(x -z),
			\end{equation}
			if we define the translated quadratic perturbations of $v$ by
			\begin{equation}\label{NTF2}
			\widehat{v}_{z;\eta}(x):= v(x + z) + \frac{\eta}{2}|x|^2, \quad x \in B - \{z\}, |z| < \delta, \eta > 0.
			\end{equation}
			By hypothesis, $w$ is $\wt{F}$-subharmonic on $\Omega$ and by restriction it is  $\wt{F}$-subharmonic on $B - \{z\} \subset \Omega_{\delta} - \{z\} \subset \Omega$. It suffices to show that $\widehat{v}_{z;\eta}$, which belongs to $\USC(\overline{B - \{z\}}) \cap C^2(B)$, satisfies
			\begin{equation}\label{NTF3}
			\mbox{ $\widehat{v}_{z;\eta}$ is strictly $\F$-subharmonic on $B - \{z\}$.}
			\end{equation}
			Indeed, by applying part (1) of definitional comparison to the pair $(w, \widehat{v}_{z;\eta})$ on $B - \{z\}$, we have
			\begin{equation}\label{DC4}
			w + \widehat{v}_{z; \eta} \leq 0 \ \text{{\em on}} \ \partial(B - \{z\}) = \partial B - \{z\} \ \ \Rightarrow \ \ w + \widehat{v}_{z; \eta} \leq 0 \ \text{{\em on}} \ B - \{z\},
			\end{equation}
			which by the identity \eqref{NTF1} is equivalent to \eqref{DC3}.
			
			It remains 
			to prove the claim \eqref{NTF3} for $\widehat{v}_{z;\eta}$ defined by \eqref{NTF2}. Since $\widehat{v}_{z;\eta} \in C^2(B - \{z\})$, we need only that the 2-jets must satisfy
			\begin{equation}\label{NTF4}
			J^2_{x-z} \widehat{v}_{z;\eta}:= (\widehat{v}_{z;\eta}(x-z), D \widehat{v}_{z;\eta}(x-z), D^2 \widehat{v}_{z;\eta}(x-z)) \in \Int \, \F_{x-z}, \quad  \forall x \in B. 
			\end{equation}
			Direct calculation yields
			\begin{equation*}\label{NTF5}
			J^2_{x-z} \widehat{v}_{z;\eta} =  (v(x), D_xv(x), D^2_x v(x) + \eta I) =  J^2_x v + (0,0,\eta I).
			\end{equation*} 
			Finally, since $v$ is strictly $\F$-subharmonic and since $\F$ is fiberegular with modulus of continuity $\delta = \delta(\eta)$  (where $|x - (x-z)| = |z| < \delta$), we conclude that
			\begin{equation*}\label{NTF6}
			J^2_{x-z} \widehat{v}_{z;\eta} \in  \Int \F_x + (0,0,\eta I) \subset  \Int \, F_{x-z},  \quad \forall x \in B,
			\end{equation*} 
			which is the needed \eqref{NTF4}.
			
			\noindent \underline{Step 2:} (Subharmonicity of $w_{\eta}^{\veps}$ by way of a Perron procedure)
			
			For each $\eta > 0$ and with $\delta = \delta(\eta)$ the modulus of continuity of $\wt{\F}$, the uniform translation property of Step 1 says that the family of functions defined on $\Omega_{\delta}$
			$$
			\mathfrak{F}_{\eta} := \left\{ w_{\eta;z}(\cdot) := w(\cdot - z) + \frac{\eta}{2} | \cdot |^2: \ \ |z| < \delta  \right\} 
			$$
			consists of $\wt{\F}$-subharmonics on $\Omega_{\delta}$. For each $\veps > 0$, by the negativity property (N) of the subequation $\wt{\F}$, the translated families
			\begin{equation}\label{perron1}
			\mathfrak{F}_{\eta}^{\veps} := \left\{  w_{\eta;z}(\cdot) - \frac{1}{2 \veps} |z|^2 = w(\cdot - z)  - \frac{1}{2 \veps} |z|^2 +  \frac{\eta}{2} |\cdot|^2: \ \ |z| < \delta \right\} 
			\end{equation}
			also consist of $\wt{\F}$-subharmonics on $\Omega_{\delta}$. Since the family $\mathfrak{F}_{\eta}^{\veps}$ is locally uniformly bounded from above, by Proposition B.4 (viii) of \cite{CPR23}, the Perron function defined by
			\begin{equation}\label{perron2}
			v_{\eta}^{\veps}(x) := \sup_{W \in \mathfrak{F}_{\eta}^{\veps}} W(x) = \sup_{\{z: |z| < \delta\}} \left[ w(x - z)  - \frac{1}{2 \veps} |z|^2 +  \frac{\eta}{2} |x|^2 \right], \quad x \in \Omega_{\delta}
			\end{equation}
			satisfies the property that its upper semicontinuous regularization (envelope) 
			\begin{equation}\label{perron3}
			\mbox{ $[v_{\eta}^{\veps}]^{\ast}$ is $\wt{\F}$-subharmonic in $\Omega_{\delta}$,}
			\end{equation}
			where we recall that $[v_{\eta}^{\veps}]^{\ast}$ the smallest function in $\USC(\Omega_{\delta})$ such that $v_{\eta}^{\veps} \leq [v_{\eta}^{\veps}]^{\ast}$ on $\Omega_{\delta}$. 
			
			Finally, using the boundedness of $w$ ($|w| \leq M$ on $\Omega$ and hence on each $\Omega_{\delta}$), we claim that by restricting $\veps$ to be less than
			\begin{equation}\label{perron4}
			\veps_{\ast} = \veps_{\ast}(\delta(\eta), M) := \frac{\delta^2(\eta)}{4M}
			\end{equation}
			one has
			\begin{equation}\label{perron5}
			\mbox{$w_{\eta}^{\veps} = [v_{\eta}^{\veps}]^{\ast}$ \ \ on $\Omega_{\delta}$ \ \ for each $\veps \in (0, \veps_{\ast}]$,}
			\end{equation}
			which by \eqref{perron3} completes the proof if we can justify \eqref{perron5}. To see that \eqref{perron5} holds, first note that by \eqref{perron2} one has
			\begin{equation}\label{perron6}
			v_{\eta}^{\veps}(x)  = \sup_{\{z: |z| < \delta\}} \left[ w(x - z)  - \frac{1}{2 \veps} |z|^2 \right]  +  \frac{\eta}{2} |x|^2 , \quad x \in \Omega_{\delta}.
			\end{equation}
			Next, by resticting to $\veps \in (0, \veps_{\ast}]$ with $\veps_{\ast}$ defined in \eqref{perron4}, as shown in section 8 of \cite{HL09} one has the following identity for the 
			supconvolution \eqref{approx2A}
			\begin{equation}\label{approx3A}
			w^{\veps}(x) = \sup_{\{ y: |y - x| < \delta \}} \left( w(y) - \frac{1}{2 \veps} |y - x|^2 \right), \quad x \in \Omega_{\delta},
			\end{equation}
			and then by making a change of variables, one has the identity
			\begin{equation}\label{approx4A}
			w^{\veps}(x)= \sup_{\{z: |z| < \delta \}} \left( w(x - z) - \frac{1}{2 \veps} |z|^2 \right), \quad \forall \, x \in \Omega_{\delta}.
			\end{equation}
			Combining \eqref{approx4A} with \eqref{perron6} one has 
			$$
			\mbox{$w_{\eta}^{\veps} = v_{\eta}^{\veps}$ \ \ on $\Omega_{\delta}$ \ \ for each $\veps \in (0, \veps_{\ast}]$,}
			$$
			where $w_{\eta}^{\veps}$ is  $\frac{1}{\veps}$-semiconvex on $\Omega \supset \Omega_{\delta}$ and hence upper semicontinuous so that \eqref{perron5} holds.
		\end{proof}
		
	\end{appendix}

\end{document}